\begin{document}
\theoremstyle{plain}
\newtheorem{theorem}{Theorem}
\newtheorem{prop}[theorem]{Proposition}
\newtheorem{cor}[theorem]{Corollary}
\newtheorem{lemma}[theorem]{Lemma}
\newtheorem{question}[theorem]{Question}
\newtheorem{conj}[theorem]{Conjecture}
\newtheorem{assumption}[theorem]{Assumption}

\theoremstyle{definition}
\newtheorem{definition}[theorem]{Definition}
\newtheorem{notation}[theorem]{Notation}
\newtheorem{condition}[theorem]{Condition}
\newtheorem{example}[theorem]{Example}
\newtheorem{introduction}[theorem]{Introduction}
\newtheorem{remark}[theorem]{Remark}

\providecommand{\abs}[1]{\left\lvert#1\right\rvert}
\providecommand{\norm}[1]{\left\lVert#1\right\rVert}
\providecommand{\floor}[1]{\left\lfloor#1\right\rfloor}
\providecommand{\Z}{\mathbb{Z}} \providecommand{\R}{\mathbb{R}}
\providecommand{\T}{\mathbb{T}} \providecommand{\PP}{\mathbb{P}}
\providecommand{\N}{\mathbb{N}} \providecommand{\C}{\mathbb{C}}
\providecommand{\Q}{{\mathbb{Q}}} \providecommand{\x}{\mathbf{x}}
\providecommand{\y}{\mathbf{y}} \providecommand{\z}{\mathbf{z}}
\providecommand{\boxend}{\hspace{\stretch{1}}$\Box$\\ \ \\}

\providecommand{\SL}{\operatorname{SL}}
\providecommand{\GL}{\operatorname{GL}}
\providecommand{\Bsph}{B_{\mathrm{sph}}}
\providecommand{\Nsph}{N_{\mathrm{sph}}}
\providecommand{\Tsph}{T_{\mathrm{sph}}}
\providecommand{\Wsph}{W_{\mathrm{sph}}}
\providecommand\SetOf[2]{\left\{\, #1\vphantom{#2}\,:\,\vphantom{#1}#2 \,\right\}}
\providecommand\smallSetOf[2]{\{#1 : #2\}}

\providecommand\conv{\operatorname{conv}}
\providecommand\val{\operatorname{val}}
\providecommand\type{\operatorname{type}}
\providecommand\vtype{\operatorname{vtype}}
\providecommand\tconv{\operatorname{tconv}}
\providecommand\maxconv{\operatorname{maxconv}}
\providecommand\minconv{\operatorname{minconv}}
\providecommand\image{\operatorname{image}}
\providecommand\kernel{\operatorname{kernel}}
\providecommand\diag{\operatorname{diag}}
\providecommand\BT{\mathcal{B}}
\providecommand\TP{\T\PP}
\providecommand\cA{\mathcal{A}}
\providecommand\cC{\mathcal{C}}
\providecommand\cF{M}
\providecommand\Hom{\operatorname{Hom}}
\providecommand\vertices{\operatorname{vert}}

\providecommand{\germ}{\mathfrak}

\renewcommand\theenumi{\alph{enumi}}

\SetArgSty{rm}

\providecommand\comment[1]{\marginpar{\raggedright\tiny #1}}

\title{Affine Buildings and Tropical Convexity}

\author[Joswig, Sturmfels, and Yu]{Michael Joswig \and Bernd Sturmfels \and Josephine Yu}

\address{Michael Joswig, FB Mathematik, AG~7, TU Darmstadt, 64289 Darmstadt, Germany}
\email{joswig@mathematik.tu-darmstadt.de}
\address{Bernd Sturmfels,  Department of Mathematics, UC Berkeley, Berkeley CA 94720, USA}
\email{bernd@math.berkeley.edu}
\address{Josephine Yu, Department of Mathematics, M.I.T., Cambridge MA 02139, USA}
\email{jyu@math.mit.edu}

\begin{abstract}
  The notion of convexity in tropical geometry is closely related to notions of convexity in the theory of affine
  buildings.  We explore this relationship from a combinatorial and computational perspective.  Our results include a
  convex hull algorithm for the Bruhat--Tits building of $\SL_d(K)$ and techniques for computing with apartments and
  membranes.  While the original inspiration was the work of Dress and Terhalle in phylogenetics, and of Faltings,
  Kapranov, Keel and Tevelev in algebraic geometry, our tropical algorithms will also be applicable to problems in other
  fields of mathematics.
 \end{abstract}

\maketitle

\section{Introduction}

\noindent
Buildings were initially introduced by Tits \cite{Ti} to provide a common geometric framework for all simple Lie groups,
including those of exceptional type. The later work of Bruhat and Tits~\cite{BT} showed that buildings are fundamental
in a much wider context, for instance, for applications in arithmetic algebraic geometry.  Among the affine buildings,
the key example is the \emph{Bruhat--Tits building} $\BT_d$ of the special linear group $\SL_d(K)$ over a field $K$ with
a discrete non-archimedean valuation.  An active line of research explores compactifications of the building~$\BT_d$;
for example, see Kapranov~\cite{Ka} and Werner~\cite{We1,We2}.

Our motivation to study affine buildings stems from the connection to biology which was proposed in Andreas Dress' 1998
ICM lecture \emph{The tree of life and other affine buildings} \cite{DT2}.  Dress and Terhalle \cite{DT1} introduced
\emph{valuated matroids} as a combinatorial approximation of the building $\BT_d$, thereby generalizing the familiar
one-dimensional picture of an infinite tree for $d=2$.  In Section \ref{sec:membranes} we shall see that valuated
matroids are equivalent to the \emph{matroid decompositions} of hypersimplices of Kapranov~\cite[Definition 1.2.17]{Ka},
to the \emph{tropical linear spaces} of Speyer \cite{Spe1}, and to the \emph{membranes} of Keel and Tevelev \cite{KT}.
The latter equivalence, shown in \cite[Theorem~4.15]{KT}, will be revisited in Theorem~\ref{thm:membrane} below.

We start out in Section \ref{sec:BT} with a brief introduction to the Bruhat--Tits building $\BT_d$ 
and to the notion of convexity in $\BT_d$ which appears in work of Faltings \cite{Fal}.
For sake of concreteness we take $K$ to be the field $\C(\!(z)\!)$ of formal Laurent series with complex
coefficients.  Our discussion revolves around the algorithmic problem of computing the convex hull of a finite set of points in the building $\BT_d$.  Here each point is a lattice which is represented by an invertible $d \times d$-matrix with
entries in $K = \C(\!(x)\!)$.  Our solution to this problem involves identifying their convex hull in $\BT_d$ with a certain \emph{tropical polytope}.

Tropical convexity was introduced by Develin and Sturmfels~\cite{DS}.  Tropical polytopes are certain contractible
polytopal complexes which are dual to the regular polyhedral subdivisions of the product of two simplices.  A review of
tropical convexity will be given in Section \ref{sec:membranes}, along with some new results, extending a formula of
Ardila \cite{Ard}, which characterize the nearest point projection onto a tropical polytope. In Section
\ref{sec:membranes}, we introduce tropical linear spaces, we represent them as tropical polytopes, and we identify them
with membranes in $\BT_d$. This allows us in Section \ref{sec:algorithms} to reduce convexity in $\BT_d$ to tropical
convexity. In addition to our convex hull algorithm, we also study the related problems of intersecting apartments or,
more generally, membranes. We prove the following result:

\begin{theorem}
  The min- and max-convex hulls of a finite set of lattices in $\BT_d$ coincides with the standard triangulation of a
  tropical polytope in a suitable membrane.
\end{theorem}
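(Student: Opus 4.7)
The plan is to exploit the identification of membranes in $\BT_d$ with tropical linear spaces established in Section~\ref{sec:membranes} (in particular Theorem~\ref{thm:membrane}), and then to transport the two convexity notions on $\BT_d$ across this identification so that they become the familiar min- and max-tropical hull operations of \cite{DS}.

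First, I would produce a single membrane $M \subseteq \BT_d$ containing the given lattices $L_1,\ldots,L_n$. Fixing representing matrices for the $L_i$, one can enlarge their columns to a finite collection of vectors in $K^d$ indexed by some ground set $[N]$; the valuations of the maximal minors of this collection define a valuated matroid, hence, via Theorem~\ref{thm:membrane}, a membrane $M$. By construction each $L_i$ is a lattice obtained from a subset of the chosen vectors, so $L_i\in M$, and $M$ inherits tropical coordinates $v_i\in\TP^{N-1}$ for each $L_i$ lying in the tropical linear space $\mathcal{L}\subseteq\TP^{N-1}$ attached to the valuated matroid.

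Second, I would take as the candidate the tropical polytope $P=\tconv(v_1,\ldots,v_n)\subseteq\mathcal{L}$ and prove equality $\minconv(L_1,\ldots,L_n)=\maxconv(L_1,\ldots,L_n)=P$ inside $M$. The inclusion of $P$ in both hulls reduces, under the membrane coordinates, to the elementary fact that componentwise min (respectively max) of tropical vectors corresponds on the lattice side to intersection (respectively sum) of the underlying sublattices; the coefficients of the tropical combinations absorb the scalar rescalings. For the reverse inclusions one uses that $M$ is stable under sums and intersections of its lattices (a defining property of membranes, see Section~\ref{sec:membranes}), so that the Faltings convex hulls cannot escape $M$, together with the characterisation of $\minconv$ and $\maxconv$ in tropical coordinates given by the extension of Ardila's nearest-point-projection formula announced in the introduction to Section~\ref{sec:membranes}.

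Third, to identify the combinatorics with the \emph{standard triangulation}, I would invoke the Develin--Sturmfels description: the tropical polytope $P$ is the polyhedral complex dual to the regular subdivision of $\Delta_{n-1}\times\Delta_{N-1}$ induced by the matrix $(v_1\mid\cdots\mid v_n)$. The types (or vertex-types) of points in $P$ then agree with the types of the corresponding lattices in $M$, and the resulting cell decomposition is precisely the standard triangulation of $P$.

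The main obstacle is the middle step, namely showing that lattice-theoretic convexity really does pull back to tropical min/max convexity \emph{and} that no stray points of $\BT_d$ outside $M$ enter the hulls; both directions are needed and rely crucially on the membrane $M$ being closed under the relevant lattice operations. Producing the ambient membrane in the first step is also delicate, but it is essentially repackaged by Theorem~\ref{thm:membrane}, so the substantive work is the convexity transfer inside $M$.
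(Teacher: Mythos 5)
Your first step (concatenating the generating matrices to get an ambient membrane) and your use of Theorem~\ref{thm:membrane} to pass to tropical coordinates match the paper's setup, but the middle step contains two genuine errors. First, you justify the reverse inclusion by asserting that a membrane "is stable under sums and intersections of its lattices (a defining property of membranes)". Membranes are max-convex but \emph{not} min-convex for $d\geq 3$: Example~\ref{ex:minconv} in the paper is an explicit counterexample, and in Example~\ref{ex:3lattices} the pairwise intersections $\Lambda_i\cap\Lambda_j$ already lie outside the membrane spanned by the columns of $(M_1,M_2,M_3)$. So the min-convex hull can and does escape the membrane you built, and your argument that "the Faltings convex hulls cannot escape $M$" collapses. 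This is precisely why the theorem says "a \emph{suitable} membrane": the paper first proves (Proposition~\ref{prop:Algo1}) that the \emph{retraction} $r_M$ of the min-convex hull onto any membrane $[M]$ is the standard triangulation of the tropical polytope $\tconv(\Psi_M(r_M(\Lambda_1)),\dots,\Psi_M(r_M(\Lambda_s)))$, and then enlarges the membrane (Algorithm~\ref{algo:A:minconvex}, by enumerating the finite fibers of $r_M$ and adjoining new generators) until it actually contains the hull, at which point the retraction is the identity on it. Your proposal is missing this enlargement step entirely.

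Second, the claimed equality $\minconv(L_1,\dots,L_n)=\maxconv(L_1,\dots,L_n)=P$ is false in general: the two hulls are different subcomplexes of $\BT_d$, and the intended reading of the theorem is that \emph{each} is a standard triangulation of a tropical polytope in its own suitable membrane. Moreover, on the coordinate side the sum of lattices does \emph{not} correspond to the coordinatewise maximum: the pointwise maximum of additive norms is not a norm (one needs $\overline{\max}$), and the coordinatewise max of two points of the min-plus tropical linear space $L_p$ need not lie in $L_p$. The paper avoids this by dualizing: Lemma~\ref{lem:dual} turns sums into intersections, so the max-convex hull of $\Lambda_1,\dots,\Lambda_s$ is computed as the min-convex hull of the dual lattices (Algorithm~\ref{algo:A:dualconvex}), living in a membrane for the inverse-transposed matrices rather than in your $P$. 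A smaller point: the decomposition of $P$ by types is the cell decomposition \eqref{cell}, which is strictly coarser than the standard triangulation of Theorem~\ref{thm:triang}; identifying the two, as your third step does, is not correct, although the standard triangulation does refine the cells, which is what the paper actually uses.
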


This is stated more precisely in Proposition \ref{prop:Algo1}.
New contributions made by this paper include the triangulation of
tropical polytopes in Theorem \ref{thm:triang}, the formulas for projecting onto tropical linear spaces in Theorem
\ref{thm:blue-red}, a combinatorial proof for the Keel-Tevelev bijection in Theorem \ref{thm:membrane}, and, most
important of all, the algorithms in Sections \ref{sec:algorithms} and \ref{sec:perspectives}.

\bigskip

\noindent \textbf{Acknowledgments:}
Michael Joswig was partially supported by Deu\-tsche For\-schungs\-ge\-mein\-schaft (FOR 565 \emph{Polyhedral Surfaces}).
Bernd Sturmfels was partially supported by the National Science Foundation (DMS-0456960), and Josephine Yu was supported
by a UC Berkeley Graduate Opportunity Fellowship and by the IMA in Minneapolis.

\smallskip

\section{The Bruhat--Tits building of $\SL_d(K)$}\label{sec:BT}

\noindent
We review basic definitions concerning Bruhat--Tits buildings, following the presentations in \cite{KT, Mus}.  The most
relevant section in the monograph by Abramenko and Brown is \cite[\S 6.9]{Bro}.  Let $R = \C[\![z]\!]$ be the ring of
formal power series with complex coefficients. Its field of fractions is the field $K = \C(\!(z)\!)$ of formal Laurent
series with complex coefficients.  Taking the exponent of the lowest term of a power series defines a valuation $\val :
K^* \rightarrow \Z$.  Note that $R$ is the subring of $K$ consisting of all field elements $c$ with $\val(c) \geq 0$.
What follows is completely general and works for other fields with a non-archimedean discrete
valuation, notably the $p$-adic numbers, but to keep 
matters most concrete we fix $K = \C(\!(z)\!)$.  We extend the
valuation to $K$ by setting $\val(0) = \infty $.  If $M$ is a matrix over $K$ then $\val(M)$ 
denotes the matrix over $\Z \cup
\{\infty\}$ whose entries are the values of the entries of $M$.

The vector space $K^d$ is a module over the ring $R$.  A \emph{lattice} in $K^d$ is an $R$-submodule generated by $d$
linearly independent vectors in $K^d$.  Each lattice $\Lambda$ is represented as the image of a matrix $M$ with $d$ rows
and $\geq d$ columns, with entries in $K$, having rank $d$.  Two lattices $\Lambda_1, \Lambda_2 \subset K^d$ are
\emph{equivalent} if $c \Lambda_1 = \Lambda_2$ for some $c \in K^*$.
Two equivalence classes of lattices are called \emph{adjacent} if there are representatives $\Lambda_1$ and $\Lambda_2$
such that $z \Lambda_2 \subset \Lambda_1 \subset \Lambda_2$.

The \emph{Bruhat--Tits building} of $\SL_d(K)$ is the flag simplicial complex $\BT_d$ whose vertices are the
equivalence classes of lattices in $K^d$ and whose edges are the adjacent pairs of lattices.  Being a \emph{flag
  simplicial complex} means that a finite set of vertices forms a simplex if and only if any two elements in that set
form an edge.  The link of any lattice $\Lambda$ in $\BT_d$ is isomorphic to the simplicial complex of all
chains of subspaces in $\C^d = \Lambda/z\Lambda$.  Thus the simplicial complex $\BT_d$ is pure of dimension
$d-1$, but it is not locally finite, since the residue field is~$\C$.  Our objective is to identify finite
subcomplexes with a nice combinatorial structure
 which is suitable for reducing computations in $\BT_d$ to tropical geometry.
  
If $\Lambda_1$ and $\Lambda_2$ are lattices then their $R$-module sum $\Lambda_1 + \Lambda_2$ and their intersection
$\Lambda_1 \cap \Lambda_2$ are also lattices.  These two operations give rise to two different notions of convexity on the Bruhat--Tits building $\BT_d$.  We say that a set
 $\mathcal{M}$ of lattices in $\BT_d$ is \emph{max-convex} if
the set of all representatives for lattices in $\mathcal{M}$ is closed under finite $R$-module sums.  
We call $\mathcal{M}$
\emph{min-convex} if that set is closed under finite intersections. If $\mathcal{L}$ is
any subset of $\BT_d$ then its \emph{max-convex hull} 
$\maxconv(\mathcal{L})$ is the set of all lattices $\Lambda$ in
$K^d$ such that $\Lambda$ is the $R$-module sum of finitely 
many lattices in $\mathcal{L}$. Similarly, the {\rm
  min-convex hull} $\minconv(\mathcal{L})$ is the set of 
  all lattices $\Lambda$ in $K^d$ such that $\Lambda$ is the intersection
of finitely many lattices in $\mathcal{L}$.  These notions of convexity give rise to the following 
problem in computational algebra:

\smallskip

\noindent \textbf{Computational Problem A}.
Let $M_1,\ldots,M_s$ be invertible $d \times d$-matrices with entries in  $K = \C(\!(z)\!)$, 
representing
lattices $\Lambda_i = \image_R(M_i)$ in $K^d$.  Compute both the min-convex hull
and the max-convex hull of the lattices $\Lambda_1,\ldots,\Lambda_s$
in the Bruhat--Tits building $\BT_d$.

\smallskip 

The duality functor $\Hom_R(\,\,\cdot \,\,, R)$
reduces a min-convex hull computation to a
max-convex hull computation and vice versa.
Given any lattice $\Lambda$, we write $\,\Lambda^* = \Hom_R(\Lambda, R)\,$
for the dual lattice.  Any $R$-module homomorphism $\Lambda \rightarrow R$ extends
uniquely to a $K$-vector
space homomorphism $K^d \rightarrow K$.  Hence the free $R$-module $\Lambda^*$ can be considered as a
lattice in the dual vector space $(K^d)^* = \Hom_R(K^d, K) $, consisting of those elements that send $\Lambda$
into $R$.  For any unit $c \in K^*$, we have $\,(c \Lambda)^* = \frac{1}{c}(\Lambda^*)$.  Since duality is
inclusion-reversing, i.e.~$\Lambda_1 \subset \Lambda_2$ implies $ \Lambda_2^* \subset \Lambda_1^*$, it respects
equivalence of lattices and adjacency of vertices in the building $\BT_d$.  Moreover, duality switches sums
and intersections:

\begin{lemma} \label{lem:dual}
For any two lattices $\Lambda_1, \Lambda_2$ in $K^d$, we have 
$\,(\Lambda_1 + \Lambda_2)^* \,=  \, \Lambda_1^* \cap \Lambda_2^* \,$ in $\,(K^d)^*$. 
\end{lemma}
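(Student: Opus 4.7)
The plan is to prove the identity by a direct double-inclusion argument, working inside the ambient dual space $(K^d)^*$ in which all three objects $(\Lambda_1+\Lambda_2)^*$, $\Lambda_1^*$, and $\Lambda_2^*$ have already been canonically identified as lattices, as explained in the paragraph preceding the lemma. Concretely, I will use the characterization $\Lambda^* = \{\varphi \in (K^d)^* : \varphi(\Lambda) \subseteq R\}$.

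For the inclusion $(\Lambda_1+\Lambda_2)^* \subseteq \Lambda_1^* \cap \Lambda_2^*$, I take $\varphi \in (\Lambda_1+\Lambda_2)^*$ and use the tautological containments $\Lambda_1, \Lambda_2 \subseteq \Lambda_1 + \Lambda_2$. Since $\varphi$ sends $\Lambda_1+\Lambda_2$ into $R$, it sends each $\Lambda_i$ into $R$, so $\varphi$ lies in both $\Lambda_i^*$. For the reverse inclusion, I take $\varphi \in \Lambda_1^* \cap \Lambda_2^*$ and a general element of $\Lambda_1+\Lambda_2$, which has the form $\lambda_1 + \lambda_2$ with $\lambda_i \in \Lambda_i$. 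Then $\varphi(\lambda_1 + \lambda_2) = \varphi(\lambda_1) + \varphi(\lambda_2)$ is a sum of two elements of $R$, hence lies in $R$, showing $\varphi \in (\Lambda_1+\Lambda_2)^*$.

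I do not expect any real obstacle: the only point that requires care is making sure both sides are viewed as subsets of the same space $(K^d)^*$ rather than as abstract $R$-modules. Since the paper has already arranged the canonical identification of $\Lambda^*$ with an $R$-submodule of $(K^d)^*$, both inclusions are then elementary module-theoretic verifications, and no appeal to bases, matrix representatives, or the valuation is needed. If desired, the same argument immediately yields the dual statement $(\Lambda_1 \cap \Lambda_2)^* = \Lambda_1^* + \Lambda_2^*$ (using reflexivity $\Lambda^{**} = \Lambda$), which is the version actually used to transport min-convex hulls to max-convex hulls in Computational Problem~A.
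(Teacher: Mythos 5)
Your proof is correct and follows essentially the same route as the paper: both inclusions are handled exactly as in the paper's proof, with ``$\subseteq$'' by restricting $\varphi$ to each $\Lambda_i$ and ``$\supseteq$'' by evaluating $\varphi$ on sums $\lambda_1+\lambda_2$, using the identification of $\Lambda^*$ with $\{\varphi\in(K^d)^*:\varphi(\Lambda)\subseteq R\}$ that the paper sets up just before the lemma. No issues.
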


\begin{proof}
  The inclusion ``$\subseteq$" is given by restricting any ring homomorphism $\,\phi:\Lambda_1 + \Lambda_2 \rightarrow
  R\,$ to $\Lambda_1 $ and to $\Lambda_2$, respectively.  The reverse inclusion ``$\supseteq$" is given by identifying
  $\,\phi \in \Lambda_1^* \cap \Lambda_2^*\,$ with the map $f_1 + f_2 \mapsto \phi(f_1) + \phi(f_2)$ where $f_i \in
  \Lambda_i$.
\end{proof}

It is known that both the max-convex hull and the min-convex hull of $\Lambda_1,\ldots,\Lambda_s$ are finite simplicial
complexes of dimension $\leq d-1$.  This finiteness result is attributed by Keel and Tevelev \cite[Lemma 4.11]{KT} to
Faltings' paper on matrix singularities \cite{Fal}.
 
Our usage of the prefixes ``min'' and ``max'' for convexity in $\BT_d$ is consistent with the alternative
representation of the Bruhat--Tits building in terms of additive norms on $K^d$.  
An \emph{additive norm} is a
map $N : K^d \rightarrow \R \cup \{\infty\}$ which satisfies the following three axioms:
\begin{enumerate}
\item $N(c \cdot f) = \val(c) + N(f)$ for any $c \in K$ and $f \in K^d$,
\item $N(f + g) \geq \min(N(f),N(g))$ for any $f,g \in K^d$,
\item $N(f) = \infty$ if and only if $f  = 0$.
\end{enumerate}
We say that $N$ is an \emph{integral} additive norm if $N$ takes values in $\Z \cup \{\infty\}$.

There is a natural bijection between lattices in $K^d$ and integral additive norms on $K^d$.  
 Namely, if $N$ is an integral additive norm then its lattice is
$\,\Lambda_N = \{ f \in K^d : N(f) \geq 0 \}$.  Conversely, if $\Lambda$ is any lattice
in $K^d$ then its additive norm
$N_\Lambda $ is given by
\begin{equation}
\label{norm}
N_\Lambda(f) \  := \  \max\SetOf{ u \in \Z }{ z^{-u} f \in \Lambda } \  =  \  \min(\val(M^{-1}f))\, ,
\end{equation}
where $M$ is a $d\times d$-matrix whose columns form a basis for $\Lambda$.  This bijection induces a homeomorphism
between the space of all additive norms (with the topology of pointwise convergence) and the space underlying the
Bruhat--Tits building $\BT_d$.  In other words, non-integral additive norms can be identified with points in the
simplices of $\BT_d$.

If $\Lambda_1$ and $\Lambda_2$ are lattices 
then the additive norm corresponding to the intersection $\Lambda_1 \cap \Lambda_2$
is the pointwise minimum of the two norms:
\[
N_{\Lambda_1 \cap \Lambda_2} \quad = \quad \min( N_{\Lambda_1}, N_{\Lambda_2}) \, .
\]
The pointwise maximum of two additive norms is generally not an additive norm. We write
$\overline{\max}(N_{\Lambda_1},N_{\Lambda_2})$ for the smallest norm which is pointwise greater than or equal to
$\max(N_{\Lambda_1},N_{\Lambda_2})$. Then we have
\[
N_{\Lambda_1 + \Lambda_2} \quad = \quad \overline{\max}(N_{\Lambda_1},N_{\Lambda_2}) \, .
\]
Our two notions of convexity on $\BT_d$ correspond to the $\min$ and
the $\overline\max$ of additive norms.
We now present a one-dimensional example which illustrates
Computational Problem~A.

\begin{example}[The convex hull of four $2 \times 2$-matrices]
\label{4leaftree} 
We consider eight vectors in $K^2$:
\[
a = \binom{z^{-3}}{z^{-3}},\,\,
b = \binom{z^{-4}}{z^5},\,\,
c = \binom{z^3}{z},\,\,
d = \binom{z^{-1}}{z^{-1}},
\]
\[
e = \binom{z^2}{z^3},\,\,
f = \binom{z^4}{z^{-4}},\,\,
g = \binom{z}{1},\,\,
h = \binom{z^4}{z}.
\]
We compute the min-convex hull in $\BT_2$ of the four lattices
\[
\Lambda_1 = R \{a,b\},\,\,\, \Lambda_2 = R \{c,d\},\,\,\, \Lambda_3 = R \{e,f\},\,\,\, \Lambda_4 = R \{g,h\}.
\]
The Bruhat--Tits building $\BT_2$ is an infinite tree \cite[\S6.9.2]{Bro}, and
 $\minconv(\Lambda_1,\Lambda_2,\Lambda_3,\Lambda_4)$ is a subtree with four
leaves and seven interior nodes, as shown in Figure~\ref{Fig1}.
The $11$ nodes in this tree represent the equivalence classes of lattices in the min-convex hull of $\Lambda_1,\Lambda_2,\Lambda_3,\Lambda_4$.  Our 
Algorithm~\ref{algo:A:minconvex} outputs a
representative lattice for each of the $11$ classes:
\begin{eqnarray*}
(1, 0, 7, 3, 6, 6, 5, 8) &
\{af, bf, cf, df, ef, fg, fh\}  \\
(1, 0, 7, 3, 6, 5, 5, 8) &
\{af, bf, cf, df, ef, fg, fh\} \\
(1, 0, 7, 3, 6, 4, 5, 8) &
\{af, bf, cf, df, ef, fg, fh\} \\
(1, 0, 7, 3, 6, 3, 5, 8) &
\{af, ah, bf, bh, cf, ch, df, dh, ef, eh, fg, fh, gh \} \\
(1, 0, 7, 3, 6, 2, 5, 7) &
\{ac, af, ah, bc, bf, bh, cd, ce, cf, cg, ch, df, \ldots, gh\} \\
(1, 0, 6, 3, 6, 1, 5, 6) &
\{ac, af, ag, ah, bc, bf, bg, bh, cd, ce, cg, df, \ldots, gh\} \\
(1, 0, 6, 3, 6, 1, 6, 6) &
\{ag, bg, cg, dg, eg, fg, gh\} \\
(1, 0, 5, 3, 6, 0, 4, 5) &
\{ab, ac, ae, af, ag, ah, bc, bd, bf, bg, bh, cd, \ldots, eh\} \\
(1, 1, 5, 3, 7, 0, 4, 5) &
\{ab, ae, bc, bd, be, bf, bg, bh, ce, de, ef, eg, eh\} \\
(2, 0, 5, 4, 6, 0, 4, 5) & 
\{ab, ac, ae, af, ag, ah, bd, cd, de, df, dg, dh\} \\
(3, 0, 5, 5, 6, 0, 4, 5) &
\{ab, ac, ae, af, ag, ah, bd, cd, de, df, dg, dh\} 
\end{eqnarray*}
Each of the $11$ lattices is represented by a vector
$u$ in $\mathbb{N}^8$ followed by
a set of pairs from $\{a,b,c,d,e,f,g,h\}$.
This data represents the following lattice
in  $\,\minconv(\Lambda_1,\Lambda_2,\Lambda_3,\Lambda_4)$:
\[
\Lambda \,\, = \,\, 
R \{ z^{-u_1}   a , \,
z^{-u_2}   b , \,
z^{-u_3}   c , \,
z^{-u_4}   d , \,
z^{-u_5}   e , \,
z^{-u_6}   f , \,
z^{-u_7}   g , \,
z^{-u_8}   h 
\}.
\]
Certain pairs among the eight generators form bases of $\Lambda \cong R^2$. The list of pairs indicates these bases.
For example, the fourth-to-last row $\,(1, 0, 5, 3, 6, 0, 4, 5) \,\, \ldots  \,$ represents 
\[
    R \{z^{-1} a,  b\}  = 
    R\{ z^{-1} a, z^{-5} c  \}  = 
    R\{ z^{-1} a, z^{-6} e  \} =    \,  \cdots \,  = \,
    R\{ z^{-6} e, z^{-5} h \} .
\]
The class of this lattice corresponds to the trivalent node on the right in Figure~1.

\begin{figure}\centering\label{Fig1}
  \includegraphics[scale = 0.85]{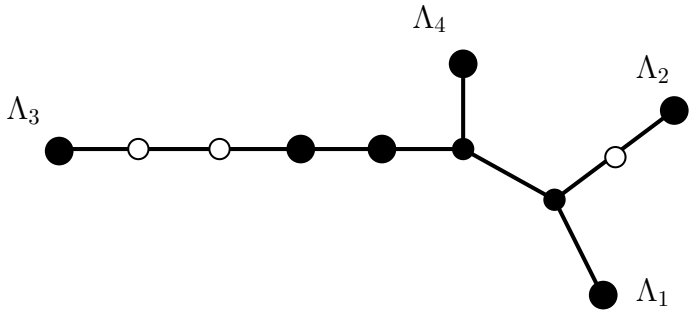}
  \caption{The convex hull of four points in the building $\BT_2$.}
\end{figure}

The bases can be determined from the labels of the arrows in Figure~\ref{Fig2}. A node uses a basis if and only if the
node lies on the two-sided infinite path (or \emph{apartment}) spanned by those arrows. There are eight distinct sets of
pairs appearing in the above list, indicating that the tree in Figure~\ref{Fig1} is divided into eight cells.  This
subdivision is the key ingredient in our algorithm.  \qed
\end{example}

\smallskip

Returning to our general discussion, we fix an arbitrary finite subset
 $M = \{f_1,\ldots,f_n\}$  of $K^d$ 
 which spans $K^d$ as a $K$-vector space, and we consider
the set of all equivalence classes of lattices of the form
$\,
\Lambda \, = \, R \bigl\{ z^{-u_1} f_1,\,  z^{-u_2} f_2,\,  z^{-u_3} f_3,\, \ldots ,\,z^{-u_n} f_n \bigr\},
$
where $ u_1,u_2,\ldots,u_n$ are any integers. This set of lattice classes is called the \emph{membrane} spanned by
$M$ in the Bruhat--Tits building $\,\BT_d$.  We denote the membrane by $\, [M]$, and we identify it
with the simplicial complex obtained by restricting $\BT_d$ to $[M]$.  
If $n=d$, so that
$M$ is a basis of $K^d$, then the membrane $[M]$ is known as an \emph{apartment} of the building
$\BT_d$. 

\begin{lemma}{\rm (Keel and Tevelev~\cite[Lemma 4.13]{KT})}\label{lem:membrane}\ \
  The membrane $[M]$ is the union of the apartments which can be formed from any $d$ linearly independent columns of $M$.
\end{lemma}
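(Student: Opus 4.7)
The plan is to prove two inclusions separately. Let $A_I$ denote the apartment spanned by a $d$-element subset $\{f_i : i \in I\}$ of $M$ consisting of $K$-linearly independent vectors. I will show $A_I \subseteq [M]$ for every such $I$, and conversely that every class in $[M]$ lies in some $A_I$.

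For the inclusion $A_I \subseteq [M]$, fix $d$ linearly independent columns $f_{i_1},\dots,f_{i_d}$ and a lattice $\Lambda = R\bigl\{z^{-v_1}f_{i_1},\dots,z^{-v_d}f_{i_d}\bigr\}$ in the apartment $A_I$. For each index $j\notin I$, since $\{f_{i_k}\}$ is a $K$-basis of $K^d$, I can uniquely write $f_j=\sum_{k=1}^d c_{jk}f_{i_k}$ with $c_{jk}\in K$. The condition $z^{-u_j}f_j\in\Lambda$ is equivalent to $u_j\le v_k+\val(c_{jk})$ for every $k$, so I set $u_{i_k}=v_k$ and $u_j=\min_{k}(v_k+\val(c_{jk}))$ (finite since $f_j\neq 0$). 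Then $z^{-u_j}f_j\in\Lambda$ for all $j$, which gives $R\{z^{-u_1}f_1,\dots,z^{-u_n}f_n\}\subseteq\Lambda$; the reverse inclusion is immediate since the generators of $\Lambda$ already appear among the $z^{-u_j}f_j$. Hence $\Lambda\in[M]$.

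For the converse inclusion $[M]\subseteq\bigcup_I A_I$, let $\Lambda=R\{z^{-u_1}f_1,\dots,z^{-u_n}f_n\}$ be an arbitrary lattice in the membrane, and write $g_j=z^{-u_j}f_j$. By construction the $g_j$ generate $\Lambda$ as an $R$-module, so their reductions $\bar g_j\in\Lambda/z\Lambda\cong\C^d$ span the residue vector space. Choose indices $I=\{i_1,\dots,i_d\}$ such that $\bar g_{i_1},\dots,\bar g_{i_d}$ form a $\C$-basis of $\Lambda/z\Lambda$. Nakayama's lemma (applied to the local ring $R$) then implies that $g_{i_1},\dots,g_{i_d}$ form an $R$-basis of $\Lambda$. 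In particular they are $R$-linearly independent, hence $K$-linearly independent as elements of $K^d$, so $\{f_{i_1},\dots,f_{i_d}\}$ is a valid $K$-basis; and the equality
\[
\Lambda \;=\; R\bigl\{z^{-u_{i_1}}f_{i_1},\dots,z^{-u_{i_d}}f_{i_d}\bigr\}
\]
shows that $\Lambda$ belongs to the apartment $A_I$.

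The argument is essentially formal; the only conceptual ingredient is Nakayama's lemma, which plays the role of the matroid exchange property for the DVR $R=\C[\![z]\!]$. The main place where one must be careful is ensuring the selected columns $f_{i_1},\dots,f_{i_d}$ are $K$-linearly independent, but this drops out of the fact that they form an $R$-basis of the rank-$d$ lattice $\Lambda$. No step should cause genuine difficulty.
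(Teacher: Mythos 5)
Your argument is correct and complete: the inclusion $A_I\subseteq[M]$ by rescaling the remaining columns via $u_j=\min_k(v_k+\val(c_{jk}))$, and the reverse inclusion by passing to $\Lambda/z\Lambda$, picking a residue basis among the $\bar g_j$, and invoking Nakayama's lemma to see that the corresponding $g_{i_1},\dots,g_{i_d}$ form an $R$-basis of $\Lambda$ (hence are $K$-independent), are both sound. The paper itself gives no proof of this lemma --- it simply cites Keel and Tevelev --- so there is nothing in-paper to compare against; your Nakayama argument is essentially the standard one for this fact, and it stands on its own.
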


For instance, if we take $M = \{a,b,c,d,e,f,g,h\} \subset K^2$ as in Example~\ref{4leaftree},
then the membrane $[M]$ is an infinite tree with seven unbounded rays,
as shown in Figure \ref{Fig2} and derived in
Example \ref{EightRays} below. The convex hull of
$\Lambda_1 = R\{a,b\}$, $\Lambda_2 = R\{c,d\}$, $\Lambda_3 = R\{e,f\}$ and
$\Lambda_4 = R\{g,h\}$ was constructed as a finite subcomplex of the infinite tree $[M]$.

\begin{figure} \label{Fig2}\centering
  \includegraphics[scale=0.85]{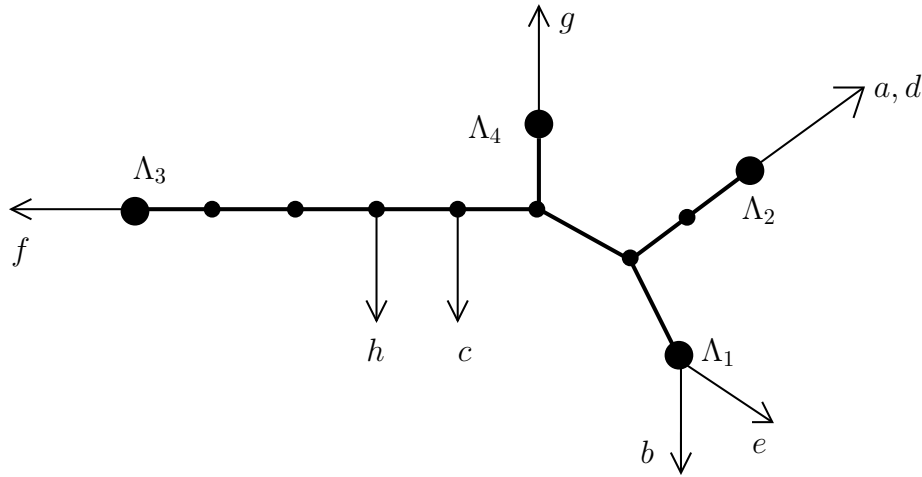}
  \caption{A one-dimensional membrane is an infinite tree.}
\end{figure}

The term ``membrane'' was coined by Keel and Tevelev \cite{KT} who showed that $[M]$ 
is a triangulation of the
tropicalization of the subspace of $K^n$ spanned by the rows of the $d \times n$-matrix $[f_1, \ldots,f_n]$.  This result is implicit in the work of Dress and Terhalle \cite{DT1, DT2}.  The precise statement 
and a self-contained proof will be given in Theorem~\ref{thm:membrane} below.

The membrane $[M]$ is obviously
max-convex in $\BT_d$.
However,  for $d \geq 3$,
membranes are generally not min-convex.
Here is a simple example which shows this:

\begin{example}
\label{ex:minconv}
We consider the $3 \times 5$-matrix
\[
M \,\, = \,\, (f_1,f_2,f_3,f_4,f_5) \,\, = \,\,
\left(
\begin{array}{ccccc}
z & 0 & 0& 1 & 1\\
0 & 1 & 0 & 1 & 0\\
0 & 0 & 1 & 0 & 1
\end{array}
\right)
\]
The lattices $\,\Lambda_1 = R\{f_1,f_2,f_3\}\,$ and $\,\Lambda_2 = R \{f_1, f_4, f_5\}\,$
are in the membrane $[M]$.
However, their intersection $\,\Lambda_1 \cap \Lambda_2 \,=\, R (0,1,-1) + z R^3\,$ 
is a lattice which is not in $[M]$. \qed
\end{example}

While apartments and membranes are infinite subcomplexes of the Bruhat--Tits building $\BT_d$, they have a natural
finite presentation by matrices whose columns are in $K^d$.  We can thus ask computational questions about apartments
and membranes, such as:

\medskip

\noindent \textbf{Computational Problem B}.
Compute the intersection of $s$ given apartments (or membranes) in $\BT_d$.  The input is represented by rank $d$
matrices $M_1,\ldots,M_s$ having $d$ rows with entries in $ K$.  The $i$-th apartment (or membrane) is
spanned by the columns of $M_i$.  The desired intersection is a locally finite simplicial complex of dimension $\leq
d-1$.

\medskip

General solutions to Problems A and B, based on tropical convexity, will be presented in Sections \ref{sec:algorithms} and
\ref{sec:perspectives}.  At this point, the reader may wish to contemplate our two problems for the special
case $d=2$: the intersection of apartments is a path which is usually finite.

\begin{remark}\label{rem:convex-def}
  In the theory of buildings there is another frequently used notion of convexity.
  Following \cite[\S3.6.2]{Bro},
  it rests on the following definitions.   The maximal
  simplices in the Bruhat--Tits building $\BT_d$ are called \emph{chambers}.  A set $\cC$ of chambers is 
  \emph{convex} if every chamber on a shortest path 
  (in the dual graph of the simplicial complex $\BT_d$)
  between two chambers of $\cC$
  also lies in~$\cC$.  This notion of convexity on $\BT_d$ agrees with
  convexity induced by shortest geodesics on spaces of non-positive curvature,
  and it is related to decompositions of semi-simple Lie groups \cite{Hit}.
  Apartments and sub-buildings as well as intersections of convex sets are convex.  A set of chambers contained in an
  apartment is convex if and only if it is the intersection of \emph{roots} (or \emph{half-apartments}).  In a thick
  building, such as $\BT_d$, every root is the intersection of two apartments.  Hence any convex set within some
  apartment of $\BT_d$ arises as the output of an algorithm for Computational Problem B.  Such algorithms are our topic
  in Section~\ref{sec:perspectives}.  The relationship of this \emph{classical convexity}  in $\BT_d$ to min- and max-convexity will be clarified in
  Proposition~\ref{prop:convex} and Theorem~\ref{thm:alessandrini}.
\end{remark}

\section{Tropical polytopes}\label{sec:proj}

\noindent
We review the basics of tropical convexity from \cite{DS}.  A subset $P$ of $\R^d$ is called \emph{tropically convex} if
it is closed under linear combinations in the min-plus algebra, i.e.~for any two vectors $x = (x_1,\dots,x_d)$ and $y =
(y_1,\dots,y_d)$ in $ P$ and any scalars $\lambda,\mu \in \R$ we also have
\[
\bigl(\min(x_1 + \lambda, y_1 + \mu), \ldots, \min(x_d + \lambda, y_d + \mu) \bigr) \,\,\, \in\,\,P.
\]
It has become customary to write the \emph{tropical arithmetic} operations as
\[
x\oplus y \ := \ \min(x,y) \quad \text{and} \quad x\odot y \ := \ x+y \, .
\]
In particular, if $x=(x_1,\dots,x_d) \in P$ then $\lambda\odot x := (\lambda \odot x_1, \dots,\lambda \odot x_d) \in P$
for all $\lambda$. Thus we can identify each tropically convex set $P \subset \R^d$ with its image in the \emph{tropical
  projective space}, which is defined as the quotient space
\[
\TP^{d-1} \quad := \quad \R^d / \R (1,1,\dots,1) \, .
\]
There is a natural metric $\delta$ on tropical projective space $\TP^{d-1}$ which is given as follows:
\begin{equation}\label{metric}
  \delta(x,y) \quad := \quad \max_{1 \leq i < j \leq d} | x_i + y_j - x_j - y_i |. 
\end{equation}
The following characterizes the projection to the nearest point in
a closed convex set.

\begin{prop}\label{prop:pi}
  Let $x \in \TP^{d-1}$ and $P$ a closed tropically convex
  set in $\TP^{d-1}$.  Among all points $w $ in $P$ that satisfy $\,
  w \geq x \,$ there is a unique coordinate-wise minimal point.
  (Here ``$w \geq x$'' means that there exist representative vectors $w,x \in \R^d$ with $w_i \geq
  x_i$ for all $i$).
    This point, which is denoted $\pi_P(x)$, minimizes the $\delta$-distance
  from $x$ to $P$.  
\end{prop}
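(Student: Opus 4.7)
The plan is to work with lifts of points from $\TP^{d-1}$ to $\R^d$. Fix a representative $x \in \R^d$ of the given point, and consider $\tilde{P} := \{v \in \R^d : [v] \in P\text{ and } v \geq x\}$, where $[v]$ denotes the class of $v$ in $\TP^{d-1}$. I first want to observe three properties of $\tilde{P}$: it is closed (since it equals the intersection of the closed preimage $\pi^{-1}(P)$ with the closed half-spaces $\{v_i \geq x_i\}$), it is nonempty (for any $[w] \in P$, the shift $w + \lambda(1,\ldots,1)$ with $\lambda = \max_i(x_i - w_i)$ lies in $\tilde{P}$), and it is closed under coordinatewise minima $\oplus$ (this is tropical convexity combined with the fact that $\min(v,v') \geq x$ whenever both $v,v' \geq x$).

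Next I would construct the candidate point $m = (m_1,\ldots,m_d)$ by setting $m_i := \inf\{v_i : v \in \tilde{P}\}$. Each $m_i$ is finite: bounded below by $x_i$ and above by $v_i$ for any fixed $v \in \tilde{P}$. To see that the infimum is attained simultaneously in all coordinates, I would, for each $i$, choose a sequence $v^{(i,k)} \in \tilde{P}$ with $v^{(i,k)}_i \to m_i$, and then set $w^{(k)} := \min(v^{(1,k)}, \ldots, v^{(d,k)})$, which lies in $\tilde{P}$ by closure under $\oplus$. Then $m_i \leq w^{(k)}_i \leq v^{(i,k)}_i$ forces $w^{(k)} \to m$ coordinatewise; closedness of $\tilde{P}$ gives $m \in \tilde{P}$. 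Uniqueness of the coordinatewise minimum is automatic.

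Before turning to the distance claim, I want to record the key normalization $\min_i(m_i - x_i) = 0$: otherwise, setting $\epsilon := \min_i(m_i - x_i) > 0$, the shifted vector $m - \epsilon(1,\ldots,1)$ represents the same class as $m$ in $\TP^{d-1}$, still satisfies the inequality $\geq x$, and is strictly smaller in at least one coordinate, contradicting minimality. Combined with the formula $\delta(x,y) = \max_i(x_i - y_i) - \min_i(x_i - y_i)$ coming directly from (\ref{metric}), this gives $\delta(x, [m]) = \max_i(m_i - x_i)$.

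For the distance optimality, let $[w] \in P$ be arbitrary and set $w' := w + (\max_j(x_j - w_j))(1,\ldots,1)$, a representative of $[w]$ lying in $\tilde{P}$ and satisfying $\min_i(w'_i - x_i) = 0$. Then $w' \geq m$ coordinatewise by minimality of $m$, so
\[
\delta(x, [m]) \;=\; \max_i(m_i - x_i) \;\leq\; \max_i(w'_i - x_i) \;=\; \delta(x, [w']) \;=\; \delta(x, [w]),
\]
which gives the distance-minimization claim. The main obstacle I anticipate is making rigorous the passage between classes in $\TP^{d-1}$ and representatives in $\R^d$, in particular establishing and then exploiting the normalization $\min_i(m_i - x_i) = 0$; once this is in place, the convexity closure under $\oplus$ does all the real work.
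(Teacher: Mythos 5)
Your proof is correct and takes essentially the same route as the paper's: closure of $P$ under coordinatewise minima together with closedness yields the minimal point, and after normalizing so that $\min_i(w_i-x_i)=0$ the distance $\delta(x,\cdot)$ becomes the maximal coordinate difference, so coordinatewise minimality gives $\delta$-optimality. The only difference is that you spell out the existence of the coordinatewise minimum (the sequence argument using closure under $\oplus$) and the normalization step, which the paper's proof states more briefly.
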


\begin{proof}
  If $w,w' \in P$ then the coordinate-wise minimum $\min(w,w')$ also lies in $P$. Since $P$ is closed, it follows that
  the set $\smallSetOf{ w \in P }{ w \geq x }$ has a minimal point $y$.  We claim that $y$ is $\delta$-closest to $x$
  among all points in $P$.  Consider any point $y' \in P$.  After translation we may assume $x = 0$ and that both $y$
  and $y'$ have its smallest coordinate zero.  Then $\delta(x,y)$ is the largest coordinate of $y$, and $\delta(x,y')$
  is the largest coordinate of $y'$.  By construction of $y = \pi_P(x)$, we have $y_i \leq y'_i$ for all $i$, and hence
  $\delta( x,y) \leq \delta(x,y')$.
\end{proof}

The map $\pi_P \,: \,\TP^{d-1} \rightarrow P ,\, x \mapsto \pi_P(x)$ is the
\emph{nearest point map} onto $P$.  Clearly, $\pi_P(x) = x$ if and only if $x \in P$.  We now give an explicit formula
for $\pi_P$ in the special case when $P$ is a \emph{tropical polytope}.  This means that $P$ is the smallest
tropically convex set containing a given finite collection of points $v_1,v_2,\dots,v_n \in \T \PP^{d-1} $.  Thus $P$
is the \emph{tropical convex hull} of these points, in symbols, $P = \tconv(v_1,v_2,\ldots,v_n)$.

\begin{lemma}\label{lem:pi}
The $i$-th coordinate of 
the nearest point map onto the tropical polytope $P = \tconv(v_1,v_2,\ldots,v_n)$
in $\TP^{d-1}$ is given by the formula
\[
\pi_P(x)_i \,\, = \,\, \min_{k\in\{1,\ldots,n\}} \max_{j \in \{1,\ldots,d\}} (v_{ki} -v_{kj} + x_j).
\]
\end{lemma}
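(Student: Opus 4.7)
The plan is to use Proposition \ref{prop:pi} and characterize $\pi_P(x)$ as the coordinate-wise minimum of all $w \in P$ with $w \geq x$, then parametrize such $w$ by tropical coefficients and minimize in each coefficient separately.

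First I would observe that every point of $P = \tconv(v_1,\dots,v_n)$ can be written in the form
\[
w \,=\, \bigoplus_{k=1}^n \lambda_k \odot v_k \,=\, \Bigl(\min_{k}(\lambda_k + v_{ki})\Bigr)_{i=1}^d
\]
for some scalars $\lambda_1,\dots,\lambda_n \in \R$. The condition $w \geq x$ reads $\min_k(\lambda_k + v_{ki}) \geq x_i$ for every coordinate $i$, which is equivalent to the system of inequalities $\lambda_k + v_{ki} \geq x_i$ for all $i,k$. Rearranging, this is $\lambda_k \geq \max_{j}(x_j - v_{kj})$ for each $k$.

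Next I would note that these constraints are decoupled in the variables $\lambda_k$, so the pointwise minimal admissible choice is
\[
\lambda_k^{\ast} \,=\, \max_{j \in \{1,\dots,d\}} (x_j - v_{kj}).
\]
Setting $y_i := \min_k (\lambda_k^{\ast} + v_{ki})$ gives a point $y \in P$ (as a tropical combination of the $v_k$) that satisfies $y \geq x$. For any other admissible $w$ coming from coefficients $\mu_k$, the constraints force $\mu_k \geq \lambda_k^{\ast}$, hence $w_i = \min_k(\mu_k + v_{ki}) \geq \min_k(\lambda_k^{\ast} + v_{ki}) = y_i$ coordinate-wise. By Proposition \ref{prop:pi}, this coordinate-wise minimum is exactly $\pi_P(x)$, so $y = \pi_P(x)$.

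Substituting $\lambda_k^{\ast}$ back yields
\[
\pi_P(x)_i \,=\, \min_{k} \bigl(\lambda_k^{\ast} + v_{ki}\bigr) \,=\, \min_{k} \max_{j}(v_{ki} - v_{kj} + x_j),
\]
which is the claimed formula. The only subtle point is recognizing that the constraints decouple across the index $k$, so that the coordinate-wise infimum over $w$ is attained by minimizing each tropical coefficient independently; everything else is a direct computation and does not require any further properties of tropical polytopes beyond closedness under $\oplus$.
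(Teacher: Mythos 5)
Your argument is correct and is essentially the paper's own proof: both verify that the displayed formula defines a point $y$ with $y \geq x$, recognize $y$ as the tropical combination with coefficients $\lambda_k^{\ast} = \max_j(x_j - v_{kj})$ so that $y \in P$, and conclude via Proposition \ref{prop:pi}. The only difference is that you spell out the coordinate-wise minimality (via the decoupled constraints $\mu_k \geq \lambda_k^{\ast}$, using that every point of $P$ is a tropical combination of the $v_k$), a step the paper asserts without detail.
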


\begin{proof}
  Set $y _i = \min_{k=1}^n \max_{j=1}^d (v_{ki} -v_{kj} + x_j)$.  
  Taking $j=i$ in the maximum, we 
  see that the vector $y = (y_1,\dots,y_d)$ satisfies $y \geq x$. Writing $y _i = \min_{k=1}^n (\max_{j=1}^d
  (x_j\!-\!v_{kj}) + v_{ki})$, we find that $y$ is a tropical linear combination of the points $v_1,\dots,v_n$.  Hence
  $y$ lies in $P$.  Moreover, $y$ is the coordinate-wise minimal vector
  in $\R^d$ with these two properties.
\end{proof}

\begin{example}
  There may be several points in a tropical polytope $P$ which minimize the distance to 
  a given point $x$.   
  Consider the point  $x=(0,1,1) $ in the plane $\TP^2$ and the  
  one-dimensional polytope $P=\tconv((1,0,0),(0,1,0),(0,0,1))$.
  The projection
  of $x$ onto~$P$ is $\pi_P(x)=(0,0,0)=(1,1,1)$, but
  $\,\delta(x,(0,0,0))\,=\,\delta(x,(0,0,1))\,=\,\delta(x,(0,1,0))\,=\,1$.  \qed
\end{example}

The formula in Lemma~\ref{lem:pi} specifies a subdivision of the tropical polytope $P$ into cells.
These \emph{cells} are
ordinary polytopes of the special form
\begin{equation}\label{cell} \quad
  \SetOf{ w \in \TP^{d-1} }{ w_i - w_j  \leq u_{ij} \text{ for all $i \neq j$ } }
  \qquad
  (\text{for some } u_{ij} \in \R). \!\!\!
\end{equation}
The cell containing $x \in P$ is specified by its \emph{type}, which is the 
collection of index sets where ``$\min$'' and
``$\max$'' are attained in the identity $\pi_{P}(x) = x$. 
To be precise, we define $\type(x) := (S_1,S_2,\ldots,S_d)$,
where 
\begin{equation}\label{eq:type}
  \begin{split}
    S_i \ &= \ \SetOf{k \in \{1,\ldots,n\} }{ \max_{j\in\{1,\dots,d\}} ( v_{ki} - v_{kj} + x_j) \, = \, x_i } \\
    &= \ \SetOf{ k }{ v_{ki} - x_i \, = \, \min(v_{k1}-x_1, v_{k2}-x_2,\ldots, v_{kd}-x_d) } \, .
  \end{split}
\end{equation}
Two points of $P$ lie in the same cell if and only if they have
the same type.  This subdivision of $P$ depends
on the chosen generators $v_1,v_2,\ldots,v_n$ and not just on the set $P$.

\begin{remark}\label{rem:root}
  The sets
  $\,
  \SetOf{w \in \TP^{d-1}}{w_i  - w_j  \,\leq \,u}
  \,$
  are the ordinary affine halfspaces which are also
  tropically convex.  For integral $u$ we call such a halfspace a \emph{root} of $\TP^{d-1}$.
\end{remark}

A point in the tropical projective space $\TP^{d-1}$ is a \emph{lattice point} if it is represented by a
vector $x$ in $\Z^d$.  We define a graph on the set of all lattice points as follows: two points $x$ and $y$ are
connected by an edge if and only if $\delta(x,y) = 1$.  The $\delta$-distance between any two lattice points in
$\TP^{d-1}$ is the shortest length of any path connecting these two points in the graph.  A \emph{tropical lattice
  polytope} is the tropical convex hull of finitely many lattice points in $\TP^{d-1}$.  
  The cells of a tropical lattice polytope are intersections of roots.

\begin{theorem} \label{thm:triang}
  The flag simplicial complex defined by this graph is a triangulation of the affine space $\TP^{d-1}$.  It restricts to
  a triangulation of each cell \eqref{cell} of each tropical lattice polytope $P$. We refer to this 
  as the \emph{standard triangulation of }$\TP^{d-1}$, or of $P$, or of \eqref{cell}.
  \end{theorem}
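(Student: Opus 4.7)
The plan is to identify the flag simplicial complex on lattice points of $\TP^{d-1}$ with the classical alcove triangulation arising from the affine Coxeter arrangement of type $\tilde A_{d-1}$, and then to observe that every cell of the form \eqref{cell} with integer $u_{ij}$ is automatically a union of closed alcoves.

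First I would unpack the edge relation. For representatives of $x,y \in \TP^{d-1}$, set $z = x-y$; the metric~\eqref{metric} simplifies to $\delta(x,y) = \max_i z_i - \min_i z_i$, well defined modulo $\R(1,\ldots,1)$. For lattice points, $\delta(x,y) = 1$ occurs exactly when $z$ can be chosen in $\{0,1\}^d \setminus \{0, (1,\ldots,1)\}$; equivalently, $z$ is the indicator vector $e_S := \sum_{i \in S} e_i$ of some proper non-empty $S \subsetneq \{1,\ldots,d\}$. Next I would characterize the cliques: given a clique $\{p_0,\ldots,p_k\}$, translate so that $p_0 = 0$ and lift each $p_i$ to $e_{S_i}$ in $\{0,1\}^d$. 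A direct inspection shows that $\delta(e_{S_i}, e_{S_j}) = 1$ forces $S_i \subseteq S_j$ or $S_j \subseteq S_i$, since otherwise $e_{S_i} - e_{S_j}$ contains both a $+1$ and a $-1$ entry and has $\max - \min \geq 2$. Therefore the $S_i$ form a chain in $2^{\{1,\ldots,d\}}$, and maximal cliques correspond bijectively to complete chains $\emptyset \subsetneq S_1 \subsetneq \cdots \subsetneq S_{d-1} \subsetneq \{1,\ldots,d\}$ translated by an arbitrary $v \in \Z^d$. These are $(d-1)$-simplices with vertex sets $\{v, v+e_{S_1}, \ldots, v+e_{S_{d-1}}\}$.

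These simplices are precisely the alcoves of the affine hyperplane arrangement $\{w_i - w_j = m : i \neq j,\ m \in \Z\}$ in $\TP^{d-1}$, and the standard theory of the affine Weyl group of type $\tilde A_{d-1}$ (see e.g.\ \cite[\S 6.9]{Bro}) says that this collection of alcoves is a simplicial tiling of $\TP^{d-1}$. Since our flag complex and the alcove complex share the same vertex set and the same family of maximal simplices, they coincide, proving the first assertion. For the restriction statement, Remark~\ref{rem:root} presents each cell \eqref{cell} as an intersection of roots $\{w_i - w_j \leq u_{ij}\}$, and for a tropical lattice polytope the $u_{ij}$ may be chosen as integer differences $v_{kj} - v_{ki}$ of coordinates of the lattice vertices $v_k$. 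Each bounding hyperplane therefore belongs to the $\tilde A_{d-1}$ arrangement, so the cell is a union of closed alcoves and the ambient triangulation restricts to one of it.

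The main obstacle is the clique-to-chain step: verifying that pairwise $\delta$-distance $1$ really enforces a total nesting of the indicator sets, rather than some weaker compatibility. Once this is in place, everything else combines well-known facts about the $\tilde A_{d-1}$ alcove tiling with the half-space description of cells already in hand.
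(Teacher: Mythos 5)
Your proposal is correct, and at its core it builds the same triangulation as the paper: the maximal cliques you describe via complete chains $\emptyset \subsetneq S_1 \subsetneq \cdots \subsetneq S_{d-1} \subsetneq \{1,\ldots,d\}$ are exactly the paper's staircase simplices $\{a,\, a+e_{\sigma_2},\, \ldots,\, a+e_{\sigma_2}+\cdots+e_{\sigma_d}\}$. The execution differs in two places. For the tiling statement the paper is elementary and self-contained: fixing the lower vertex $a$, the $(d-1)!$ staircase simplices triangulate a unit cube, and the triangulated cubes fill $\TP^{d-1}\cong\R^{d-1}$; you instead import the alcove decomposition of the affine Coxeter arrangement of type $\tilde A_{d-1}$, an external but standard fact, and in exchange your explicit clique-to-chain lemma supplies a justification for a step the paper merely asserts (that the maximal cliques are precisely the staircases). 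For the restriction statement the paper observes that each simplex of the standard triangulation is defined by inequalities $w_i-w_j\le u_{ij}$ with $u_{ij}+u_{ji}\le 1$, so an integral cell \eqref{cell} that meets the relative interior of a simplex contains the whole simplex; your version, that the facet hyperplanes of the cell belong to the arrangement $\{w_i-w_j=m\}$, is an equivalent way of saying the same thing. One small imprecision: a cell \eqref{cell} of a tropical lattice polytope may be lower-dimensional, in which case it is a union of closed faces of the alcove decomposition rather than of closed alcoves; your own hyperplane argument already gives this sharper statement (every relatively open face of the arrangement is either contained in or disjoint from a region cut out by integral root inequalities), so the conclusion is unaffected.
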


\begin{proof}
  We represent points in $\TP^{d-1}$ by vectors with first coordinate zero. This identifies the lattice points in
  $\TP^{d-1}$ with $\Z^{d-1}$.  The maximal simplices in the flag complex are
  \[
  \bigl\{ a , a + e_{\sigma_2}, a + e_{\sigma_2} + e_{\sigma_3} , \dots,
  a + e_{\sigma_2} + e_{\sigma_3} + \dots + e_{\sigma_d} \bigr\} \, ,
  \]
  where $u \in \Z^{d-1}$ and $\sigma$ is any permutation of $\{2,\dots,d\}$. If we fix $a$ and let $\sigma$ range over all $(d-1)!$
   permutations then these simplices triangulate the unit cube with lower vertex $a$.  Putting all these triangulated
  cubes together, we see that the flag complex is a triangulation of $\TP^{d-1}$.
      Each simplex in this standard triangulation is the solution set to a system of inequalities $ w_i - w_j \leq u_{ij}$
  where $u_{ij} + u_{ji} \leq 1$ for all $1 \leq i < j \leq d $.  This implies that if $w$ is any point in a cell
  \eqref{cell} then that cell contains the entire simplex of the standard triangulation which has $w$ in its relative
  interior.  Therefore the standard triangulation of $\TP^{d-1}$ induces a triangulation of every tropical lattice
  polytope.
\end{proof}

\begin{figure}\label{fig:polygon}\centering
  \includegraphics[width=.9\linewidth]{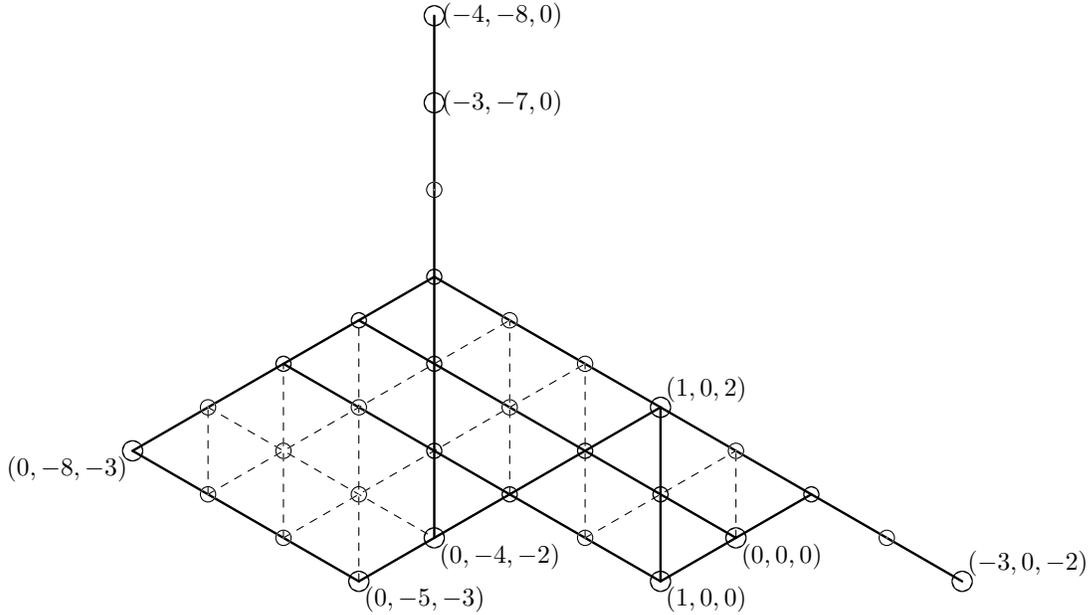}
  \caption{The tropical convex hull of nine labeled lattice points in $\TP^2$.  Dashed lines and white points indicate
    the standard triangulation of this polygon.  Solid lines and black points show the decomposition into cells
    (\ref{cell}).}
\end{figure}

\begin{example}[$d=3, n=9$] 
\label{ThreeNine}
Let $v_1,v_2,\ldots,v_9$ denote the columns of  
\begin{equation}
\label{39matrix}
 V \,\,\, = \,\,\,
\begin{pmatrix}
  \phantom{-}0 & \phantom{-}0 & \phantom{-}0 &    1 & -3 & 1 &    -3 & -4 & 0 \,\\
  -5 & -4 & -8 &  0 & \phantom{-}0 & 0  & -7 & -8 & 0 \, \\
  -3 & \phantom{-}2 & -3 & 0 & -2 & 2 & \phantom{-}0 & \phantom{-}0 & 0\, 
\end{pmatrix}
\end{equation}
We compute the tropical convex hull $\, P = \tconv(v_1,\ldots,v_9)\,$ in $\TP^2$.  The tropical lattice polygon $P$ has
ten $2$-dimensional cells, $28$ edges, and $19$ vertices.  Hence there are $10+28+19=57$ distinct types $\,\type(x) =
(S_1,S_2,S_3)\,$ among the points $x$ in $P$. The standard triangulation of $P$ is a simplicial complex with $32$
triangles, $62$ edges and $31$ vertices, namely, the lattice points in $P$. It is depicted in Figure \ref{fig:polygon}.
\qed
\end{example}

By \cite[Theorem 23]{DS},
the convex hull of the rows of a matrix equals the convex hull of the columns of
that same matrix.
Indeed, if $V$ is the $d \times n$-matrix whose columns are the vectors
$v_i$ then the cell complex on $P = \tconv(v_1,\ldots,v_n)$ defined by the types is isomorphic to the cell complex on the convex hull in $\TP^{n-1}$ of the $d$ row vectors of $V$.

\begin{example} \label{ThreeNineAgain}(Self-Duality of Tropical Polytopes) \ 
  Let $v'_1,v'_2,v'_3$ be the row vectors of the matrix $V$ in \eqref{39matrix}, and let $P' = \tconv(v'_1,v'_2,v'_3)$
  be their tropical convex hull in $\TP^8$.  The tropical triangle $P'$ contains precisely the following $31$ lattice
  points:

  \smallskip

  \begin{tabular*}{.9\linewidth}{@{\extracolsep{\fill}}ccc@{}}
    ${\bf (\underbar{\bf 4},\underbar{\bf 4},\underbar{\bf 4},5,1,5,1,0,4)}$ &
    $(\underbar{4},\underbar{4},\underbar{3},5,1,5,1,0,4)$ &
    $(\underbar{4},\underbar{4},\underbar{2},5,1,5,1,0,4)$\\
    $(\underbar{4},\underbar{4},\underbar{1},5,1,5,1,0,4)$ &
    $(\underbar{3},\underbar{4},\underbar{3},5,1,5,1,0,4)$ &
    $(\underbar{3},\underbar{4},\underbar{2},5,1,5,1,0,4)$\\
    $(\underbar{3},\underbar{4},\underbar{1},5,1,5,1,0,4)$ &
    $(\underbar{3},\underbar{4},\underbar{0},5,1,5,1,0,4)$ &
    $(\underbar{2},\underbar{4},\underbar{2},5,1,5,1,0,4)$\\
    $(\underbar{2},\underbar{4},\underbar{1},5,1,5,1,0,4)$ &
    $(\underbar{2},\underbar{4},\underbar{0},5,1,5,1,0,4)$ &
   
    $(\underbar{3},\underbar{4},0,\underbar{6},2,6,1,0,5)$\\

    $(\underbar{3},4,0,\underbar{7},\underbar{3},7,1,0,6)$ &
    $(\underbar{3},4,0,\underbar{8},\underbar{4},8,1,0,7)$ &

    $(\underbar{1},\underbar{4},\underbar{1},4,1,5,1,0,4)$\\
    $(\underbar{1},\underbar{4},\underbar{0},4,1,5,1,0,4)$ &

    $(\underbar{2},\underbar{4},0,5,\underbar{2},6,1,0,5)$ &
    $(\underbar{3},\underbar{4},0,6,\underbar{3},7,1,0,6)$\\

    $(\underbar{3},4,0,\underbar{7},\underbar{4},8,1,0,7)$ &
    $(\underbar{3},4,0,\underbar{8},\underbar{5},8,1,0,8)$ &

    $(\underbar{0},\underbar{4},\underbar{0},3,1,5,1,0,3)$\\

    $(\underbar{1},\underbar{4},0,4,\underbar{2},6,1,0,4)$ &
    $(\underbar{2},\underbar{4},0,5,\underbar{3},7,1,0,5)$ &
    $(\underbar{3},\underbar{4},0,6,\underbar{4},8,1,0,6)$\\

    $(\underbar{3},4,0,\underbar{7},\underbar{5},8,1,0,7)$ &
    $(\underbar{3},4,0,\underbar{8},\underbar{6},8,1,0,8)$ &
    $(\underbar{3},4,0,\underbar{8},\underbar{7},8,1,0,8)$\\
    ${\bf (\underbar{\bf 3},4,0,\underbar{\bf 8},\underbar{\bf 8},8,1,0,8)}$ &

    $(\underbar{0},\underbar{5},\underbar{0},3,1,5,2,1,3)$ &

    $(\underbar{0},5,\underbar{0},3,1,5,\underbar{3},2,3)$\\

    ${\bf (\underbar{\bf 0},5,\underbar{\bf 0},3,1,5,3,\underbar{\bf 3},3)}$ &\\
  \end{tabular*}
\smallskip

\noindent
Here each point is represented uniquely by a non-negative vector with a zero entry. The boldfaced vectors represent the
given points $v'_1,v'_2,v'_3 \in \TP^8$.  The underlined triples of coordinates will be explained in Example
\ref{ex:ProbA}.  The tropical triangle $P'$, which lives in $\TP^8$, is isomorphic to the tropical $9$-gon $P$ of
Example \ref{ThreeNine}, which lives in $\TP^2$ and is depicted in Figure \ref{fig:polygon}. According to equation (14)
in \cite[page 16]{DS}, the isomorphism between the two tropical polygons is given by the piecewise-linear maps
\begin{equation}\label{eq:iso}
  \begin{split}
    P \rightarrow P',\
    &(x_1,x_2,x_3) \,\mapsto
    \bigl(\min_{i=1}^3 (v_{i1} - x_i), \ldots, \min_{i=1}^3 ( v_{i9} -
    x_i) \bigr) \, ,\\
    P' \rightarrow P,\
    &(y_1,\ldots,y_9) \mapsto
    \bigl(
    \min_j (v_{1j}{-} y_j),    \dots,
    \min_j (v_{3j} {-} y_j) \bigr) \, . 
  \end{split}
\end{equation} 
These bijections are inverses of each other. They are
linear on each cell, and
they identify the types: 
if $x \in P$ and $\type(x) = (S_1,S_2,S_3)$ 
then the corresponding point $y \in P'$ has 
$\type(y) = (S'_1,S'_2,\ldots,S'_9)$
where $\,S'_j = \{i: j \in S_i\}$.
The $31$ lattice points in $\TP^8$ that are listed above
get sent to the $31$ lattice points in Figure \ref{fig:polygon}
by the map $P' \rightarrow P$. \qed
\end{example}

We close with the remark that several algorithms are available for computing a tropical polytope $P$ from its defining
matrix $V = (v_{ij})$.  They will be discussed in Section~\ref{sec:algorithms}.

\section{Tropical linear spaces and membranes}\label{sec:membranes}

\noindent
This section is concerned with the relationship between tropical linear spaces, valuated matroids \cite{DT1, DT2}, and
membranes \cite{KT} in the Bruhat--Tits building.  In order to think of these objects as tropical polytopes, we shall now
augment the real numbers $\R$ by the extra element $\infty$. Note that $\infty$ is the additively neutral element in the
min-plus algebra.  We define the \emph{compactified tropical projective space} $\overline{\TP}^{d-1}$ to be $(\R \cup
\{\infty\})^d \setminus \{(\infty,\ldots,\infty)\}$ modulo the equivalence relation given by tropical scalar
multiplication.  The notions of tropical convexity, tropical polytopes and lattice points make sense in
$\overline{\TP}^{d-1}$.  When extending the metric $\delta$ to $\overline{\TP}^{d-1}$ we use the convention that $\infty
- \infty = 0$ in the formula (\ref{metric}).  Proposition~\ref{prop:pi} and Lemma~\ref{lem:pi} remain valid, and there
is a standard triangulation of $\overline{\TP}^{d-1}$.  That standard triangulation coincides with the compactified
apartment in the work of Werner \cite{We1,We2}.  We also refer to Alessandrini \cite{Ale} whose tropical approach to
buildings is similar to ours and is aimed at applications in Teichm\"uller theory.

For experts on buildings we note that our two notions of convexity in Problem A reflect two different compactifications
of the Bruhat--Tits buildings $\BT_d$. The first is featured in \cite{Mus, We1} and we call it the
\emph{max-compactification}.  It is a simplicial complex whose vertices are all free $R$-submodules of $K^d$, and the
boundary consists of modules of rank less than $d$.  The second compactification, which we call the
\emph{min-compactification}, arises more naturally from tropical geometry.  Its points consist of all additive seminorms
on $K^d$. An additive seminorm is a function $N : K^d \rightarrow \R \cup \{\infty\}$ which satisfies the first two
axioms of an additive norm. If $N$ is an additive seminorm then $N^{-1}(\infty)$ is a linear subspace of $K^d$.  The
boundary of the min-compactification consists of additive seminorms for which $N^{-1}(\infty)$ is positive-dimensional.
We shall not dwell on the matters here, but we do wish to underline that our combinatorial results are compatible with
these compactifications.

\smallskip

We now review the definition of tropical linear spaces \cite{Spe1, SS}.  Fix two positive integers $d \leq n$ and
consider a map $p : \{1,2,\dots,n\}^d \rightarrow \R \cup \{\infty\}$.  Following Dress and Terhalle \cite{DT1, DT2}, we
say that $p$ is a \emph{valuated matroid} if $p(\omega)$ depends only on the unordered set
$\{\omega_1,\dots,\omega_d\}$, and $p(\omega) = \infty$ whenever $\omega$ has fewer than $d$ elements, and $p$ satisfies
the following variant of the basis exchange axiom: for any $(d-1)$-subset $\sigma$ and any $(d+1)$-subset $\tau $ of
$\{1,2,\dots,n\}$, the minimum of the list of numbers $\bigl( p(\sigma \cup \tau_i) + p(\tau \setminus \{\tau_i\})\,:\,
i = 1,2,\ldots, d+1\,\bigr)$ is attained at least twice.  This axiom is equivalent to saying that $p$ lies in the
\emph{tropical prevariety} \cite{RGST} specified by the set of all \emph{quadratic Pl\"ucker relations}.

Fix a valuated matroid $p$. The associated \emph{tropical linear space} $L_p$ consists of all points $x \in
\overline{\TP}^{n-1}$ such that, for any $(d{+}1)$-subset $\tau$ of $\{1,2,\dots,n\}$, the minimum of the numbers $\,p(
\tau \backslash \{\tau_i\}) + x_{\tau_i}$, for $i=1,2,\ldots,d$, is attained at least twice.  This list of numbers
represents a \emph{circuit} of $p$.  The tropical linear space $L_p$ is tropically convex, and it can be represented as
a tropical lattice polytope as follows.  For any $(d{-}1)$-subset $\sigma$ of $\{1,\ldots,n\}$ let $p(\sigma *)$ denote
the vector in $(\R \cup \{\infty\})^n$ whose $j$-th coordinate equals $\, p(\sigma \cup \{j\}) $. We regard $p(\sigma
*)$ as a point in $\overline{\TP}^{n-1}$, or, combinatorially, as a \emph{cocircuit} of the valuated matroid $p$.

\begin{theorem}\label{thm:tropical_linear_space}
{\rm (Yu and Yuster \cite[Theorem~16]{YY}) \  }
  The tropical linear space $L_p$ is the tropical convex hull in the compactified tropical projective space
  $\overline{\TP}^{n-1}$ of all the cocircuits $\,p(\sigma *)$ of the underlying valuated matroid $p :
  \{1,2,\dots,n\}^d \rightarrow \R \cup \{\infty\}$.
\end{theorem}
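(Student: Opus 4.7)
The plan is to prove both inclusions of the asserted equality. For the easy direction, that $\tconv$ of the cocircuits is contained in $L_p$, I would proceed in two steps. First, every cocircuit $p(\sigma *)$ itself lies in $L_p$: for any $(d{+}1)$-subset $\tau$, evaluating the circuit relation at $x = p(\sigma *)$ produces the list $\bigl(p(\tau \setminus \tau_i) + p(\sigma \cup \tau_i) : i = 1,\ldots, d+1\bigr)$, whose minimum is attained at least twice by the valuated exchange axiom applied to the pair $(\sigma,\tau)$. Second, $L_p$ is tropically convex: every defining condition has the shape ``$\min_i (a_i + x_i)$ attained twice,'' a property preserved under tropical scalar multiplication and coordinatewise minimum (the latter by a short case analysis showing that both realizing indices survive). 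Together these give the inclusion.

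For the reverse inclusion $L_p \subseteq P := \tconv\{p(\sigma *)\}$, I would invoke Lemma~\ref{lem:pi}: it suffices to show $\pi_P(x) = x$ for every $x \in L_p$. Unpacking the formula there, and noting that coordinates with $x_k = \infty$ are automatically fixed by $\pi_P$, the task reduces to proving that for each coordinate $k$ with $x_k < \infty$ there exists a $(d{-}1)$-subset $\sigma$ with $k \notin \sigma$ and
\[
\min_{j} \bigl( p(\sigma \cup j) - x_j \bigr) \,=\, p(\sigma \cup k) - x_k.
\]
My candidate is to take $B$ to be a basis of the underlying matroid $\{B' : p(B') < \infty\}$ that contains $k$ and minimizes the twisted weight $p(B) - \sum_{\ell \in B} x_\ell$, and to set $\sigma := B \setminus k$.

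The heart of the argument is verifying this minimization claim, and this is where I expect the main obstacle to sit since it is the step that genuinely invokes the hypothesis $x \in L_p$. I would split into the case $j \in B$ (where $\sigma \cup j$ has cardinality $d{-}1$, so $p(\sigma \cup j) = \infty$ and the inequality is trivial) and $j \notin B$. In the latter, suppose toward contradiction that $p((B \setminus k) \cup j) + x_k < p(B) + x_j$. Applying the tropical Plücker relation at $\tau := B \cup \{j\}$, the $i = k$ term is strictly smaller than the $i = j$ term, so the required twofold minimum must also be attained at some third index $i_0 \in B \setminus k$, giving
\[
p((B \setminus i_0) \cup j) + x_{i_0} \,\leq\, p((B \setminus k) \cup j) + x_k.
\]
Since $B'' := (B \setminus i_0) \cup j$ still contains $k$, minimality of $B$ forces $p(B'') + x_{i_0} \geq p(B) + x_j$; chaining these two bounds yields $p(B) + x_j \leq p((B \setminus k) \cup j) + x_k$, contradicting the assumption. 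With this swap in place, the rest of the verification is routine bookkeeping around degenerate supports.
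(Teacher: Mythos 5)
The paper itself offers no proof of this statement --- it is imported verbatim from Yu and Yuster \cite[Theorem~16]{YY} --- so your argument has to stand on its own rather than be measured against an internal proof. Both halves of your plan are the right ones, and the hard direction is correct in the main case: the inclusion $\tconv\{p(\sigma*)\}\subseteq L_p$ does follow from the exchange axiom applied to the pair $(\sigma,\tau)$ together with the (easily verified) tropical convexity of each ``minimum attained at least twice'' condition; and for $x\in L_p$ with all coordinates finite, your choice of a basis $B\ni k$ minimizing $p(B)-\sum_{\ell\in B}x_\ell$, followed by the exchange/contradiction at $\tau=B\cup\{j\}$, is watertight (add the small remark that some basis contains $k$ at all: if $k$ were a loop, the circuit condition at $\tau=B_0\cup\{k\}$ would already rule out $x_k<\infty$).

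The genuine gap is the compactified setting, which you wave off as ``routine bookkeeping'' but which is exactly where the statement lives: $L_p\subset\overline{\TP}^{n-1}$ really contains points with infinite coordinates. For instance, with $n=3$, $d=2$, $p(12)=p(13)=0$, $p(23)=\infty$, the point $x=(0,\infty,\infty)$ lies in $L_p$. Two things then break. First, your minimization degenerates: in this example every basis containing $k=1$ meets $F=\{j:x_j=\infty\}$, so $p(B)-\sum_{\ell\in B}x_\ell=-\infty$ for all admissible $B$, and the step ``minimality of $B$ forces $p(B'')+x_{i_0}\geq p(B)+x_j$'' can no longer be obtained by cancelling the (now infinite) common sum. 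Second, matching coordinate $k$ is not enough: for the tropical combination of the scaled cocircuits to equal $x$ you also need $p((B\setminus k)\cup j)=\infty$ for every $j\in F$, since otherwise the chosen generator is finite at $j$ and drags the combination below $x_j=\infty$; nothing in your choice of $B$ guarantees this. The repair is an extra idea, not bookkeeping: first show from the circuit conditions that no index $k$ with $x_k<\infty$ lies in the closure of $F$, fix a maximal independent set $I_F\subseteq F$, and minimize $p(B)-\sum_{\ell\in B\setminus F}x_\ell$ over bases $B$ with $I_F\cup\{k\}\subseteq B$. Then $F\subseteq\mathrm{cl}(B\setminus\{k\})$ yields the required infinities at all $j\in F$ automatically, and in your contradiction step an index $i_0\in I_F$ can never attain the finite minimum, so the exchange stays inside the restricted family and the rest of your argument goes through verbatim.
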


The tropical linear space $L_p$ is tropically convex.  Hence it has a nearest point map $\pi_{L_p}$ which takes any
point $x \in \overline{\TP}^{n-1}$ to the coordinate-wise minimum in $\smallSetOf{w \in L_p}{w \geq x}$.  We now present
two rules for evaluating this map.

\smallskip

\noindent \textbf{The Blue Rule.}  Form the vector $w \in \R^n$ whose coordinates are
\begin{equation}\label{minmaxred}
  w_i \quad = \quad \min_\sigma \, \max_{j \not\in \sigma} 
  \bigl( p (\sigma \cup \{i\}) - p(\sigma \cup \{j\}) + x_j \bigr). 
\end{equation}
Here the minimum is over all $(d-1)$-subsets $\sigma$ of $\{1,2,\dots,n\}$.

\smallskip

\noindent \textbf{The Red Rule.}  Start with $v = (0,0,\ldots,0)$.  For any $(d+1)$-set $\tau$ do: If the minimum of the
numbers $p( \tau \backslash \{\tau_i\}) + x_{\tau_i}$ is attained only once, for the index $i$, then let
$\gamma_{\tau,i}$ be the difference of the second smallest number minus that minimum, and set $v_{\tau_i} :=
\max(v_{\tau_i}, \gamma_{\tau,i})$.

\smallskip

The terms \emph{Blue Rule} and \emph{Red Rule} were introduced by Ardila~\cite{Ard}. 
The following theorem extends his main result in \cite{Ard} from ordinary
matroids to valuated matroids:

\begin{theorem}\label{thm:blue-red}
  Let $p$ be a valuated matroid, $L_p$ its tropical linear space and $x \in \overline{\TP}^{n-1}$. If $v$
  and $w$ are computed by the Red Rule and the Blue Rule then $\pi_{L_p}(x)
  \, = \, x+v \,= \, w $.
\end{theorem}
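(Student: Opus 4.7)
The plan is to establish $w = \pi_{L_p}(x)$ and $x + v = \pi_{L_p}(x)$ separately. The first is an immediate corollary of prior results: by Theorem \ref{thm:tropical_linear_space}, $L_p$ equals the tropical convex hull in $\overline{\TP}^{n-1}$ of the cocircuits $p(\sigma *)$ ranging over $(d-1)$-subsets $\sigma$. Applying Lemma \ref{lem:pi} to this presentation gives
\[
\pi_{L_p}(x)_i \;=\; \min_\sigma \max_j \bigl(p(\sigma \cup \{i\}) - p(\sigma \cup \{j\}) + x_j\bigr),
\]
and terms with $j \in \sigma$ satisfy $p(\sigma \cup \{j\}) = \infty$, contributing $-\infty$ to the max and hence droppable. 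What remains is exactly the Blue Rule formula (\ref{minmaxred}), so $w = \pi_{L_p}(x)$.

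For the Red Rule, write $c_j^\tau := p(\tau \setminus \{\tau_j\}) + x_{\tau_j}$ for the $j$-th entry of the $x$-circuit on the $(d+1)$-set $\tau$. The direction $w \geq x + v$ follows from the stronger statement that any $y \in L_p$ with $y \geq x$ satisfies $y_i - x_i \geq v_i$. Indeed, fix a $(d+1)$-set $\tau$ whose $x$-circuit has unique minimum $m = c_k^\tau$ with gap $\gamma := \gamma_{\tau, k}$, and set $\delta_j := y_{\tau_j} - x_{\tau_j} \geq 0$. The $y$-circuit values are $c_j^\tau + \delta_j$; for $j \neq k$ they are at least $m + \gamma$, while at $j = k$ the value is $m + \delta_k$. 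Were $\delta_k < \gamma$, the $y$-circuit would have a unique minimum at $k$, contradicting $y \in L_p$. Hence $\delta_k \geq \gamma_{\tau, k}$, and maximizing over applicable $(\tau, k)$ with $\tau_k = i$ yields $y_i - x_i \geq v_i$; specializing to $y = w$ gives $w \geq x + v$.

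The reverse inequality $w \leq x + v$ is the main challenge. The strategy is to produce, for each $i$ with $w_i > x_i$, a $(d+1)$-set $\tau^*$ whose $x$-circuit has unique minimum at some position $k$ with $\tau^*_k = i$ and gap $\gamma_{\tau^*, k} \geq w_i - x_i$, forcing $v_i \geq w_i - x_i$. The natural candidate is $\tau^* := \sigma^* \cup \{i, j^*\}$, where $\sigma^*$ achieves the minimum in the Blue Rule expression for $w_i$ and $j^* \neq i$ is a corresponding argmax; a direct computation gives $c_{j^*}^{\tau^*} - c_i^{\tau^*} = w_i - x_i$, so $c_i^{\tau^*}$ is at most the circuit minimum with gap at most $w_i - x_i$. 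The delicate point is to verify $c_l^{\tau^*} \geq c_{j^*}^{\tau^*}$ for every $l \in \sigma^*$, which would upgrade this to equality and to a unique minimum at $i$. Unfolding, this reduces to an inequality on a single Blue Rule term at $\sigma' := (\sigma^* \setminus \{l\}) \cup \{j^*\}$ with $j = l$; the minimality of $\sigma^*$ only forces the \emph{maximum} over $j$ for $\sigma'$ to be at least $w_i - x_i$, which is strictly weaker. The main obstacle is to close this gap, and doing so will invoke the tropical Pl\"ucker (quadratic) relations of the valuated matroid $p$: a symmetric-exchange argument should either directly verify the needed inequality or substitute $(\sigma^*, j^*)$ by an equally optimal pair for which it holds. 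An alternative route is to verify $x + v \in L_p$ by checking the circuit axiom for every $(d+1)$-set and concluding via the minimality in Proposition \ref{prop:pi}; that route also rests on the Pl\"ucker relations and amounts to the valuated analogue of Ardila's matroid argument \cite{Ard}.
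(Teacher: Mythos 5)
Your treatment of the Blue Rule is correct and is exactly the paper's own justification: Theorem~\ref{thm:tropical_linear_space} presents $L_p$ as the tropical convex hull of the cocircuits $p(\sigma *)$, and Lemma~\ref{lem:pi} applied to that generating set yields formula (\ref{minmaxred}) after discarding the $j\in\sigma$ terms, so $w=\pi_{L_p}(x)$. Your argument that every $y\in L_p$ with $y\geq x$ satisfies $y\geq x+v$ (via the unique-minimum circuit on each relevant $\tau$), hence $x+v\leq w$, is also sound.

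The genuine gap is the reverse inequality $w\leq x+v$, equivalently that the Red Rule actually reaches the projection: $v_i\geq w_i-x_i$ for each $i$, or what amounts to the same, $x+v\in L_p$. You propose the witness $\tau^*=\sigma^*\cup\{i,j^*\}$ but concede that you cannot verify that the $x$-circuit on $\tau^*$ attains its minimum uniquely at $i$ with gap at least $w_i-x_i$: the minimality of $\sigma^*$ in the Blue Rule only bounds a maximum over $j$ at the exchanged set $\sigma'$, which, as you note, is strictly weaker than the coordinatewise inequality $c_l^{\tau^*}\geq c_{j^*}^{\tau^*}$ you need, and you defer the repair to an unexecuted ``symmetric-exchange argument'' invoking the quadratic Pl\"ucker relations. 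That deferred step is precisely the substantive content of the theorem; without it you have proved only one of the two inequalities, so the equality $\pi_{L_p}(x)=x+v$ is not established. This is exactly the point where the paper leans on prior work: its (admittedly sketched) proof observes that for ordinary matroids, where $p$ takes values in $\{0,\infty\}$, the statement is \cite[Theorem~1]{Ard}, asserts that Ardila's proof generalizes to valuated matroids, and re-derives the Blue Rule independently from Lemma~\ref{lem:pi} and Theorem~\ref{thm:tropical_linear_space}, just as you do. To complete your writeup you must either carry out the exchange argument in the valuated setting (using the Pl\"ucker relations exactly where you flag them) or explicitly adapt Ardila's proof; your alternative suggestion of verifying the circuit axiom for $x+v$ directly faces the same missing exchange step.
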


\begin{proof}[Sketch of Proof]
  In the case of ordinary matroids, the image of $p$ lies in $\{0,\infty\}$.  This special case is
  \cite[Theorem~1]{Ard}.  Ardila's proof easily generalizes to valuated matroids.  The correctness of the Blue Rule also
  follows from Lemma~\ref{lem:pi}~and Theorem~\ref{thm:tropical_linear_space}.  
\end{proof}

\begin{remark}\label{rem:0}
  The Red Rule and the Blue Rule produce the identical result in the special case when $x=(0,0,\ldots,0)$.
  We find that  $\,\pi_P(0,0,\dots,0) \in L_p\,$ is the tropical sum of all cocircuits $p(\sigma *)$ of the valuated matroid $p$, provided each cocircuit is represented by the unique vector whose
 coordinates are non-negative and has at least one coordinate zero.
\end{remark}
  
We now apply tropical convexity to the Bruhat--Tits building $\BT_d$.  We begin with a review on how
tropical linear spaces are related to ordinary linear spaces over the field $K = \C(\!(x)\!)$.  Let $M$ be a $d \times
n$-matrix of rank $d$ with entries in $K$. The row space of $M$ is a $d$-dimensional linear subspace of $K^n$, or a
$(d-1)$-dimensional subspace of the projective space $\PP^{n-1}_K$. If $\omega$ is an ordered list of $d$ elements
in $\{1,2,\dots,n\}$ then $M_\omega$ denotes the corresponding $d \times d$-submatrix.  The matrix $M$ defines a
valuated matroid $p$ by the rule
\begin{equation} \label{valuatedM}
p(\omega) \quad = \quad \val \bigl( \det(M_\omega) \bigr) \, .
\end{equation}
Note that $p(\omega) = \infty$ if and only if $M_\omega$ is not invertible over $K$.

\begin{prop} \label{prop:linear} {\rm (Speyer and Sturmfels \cite[Theorem 2.1]{SS})}
The lattice points in the tropical linear space $L_p$ are precisely the points $\val(v)$ where $v$ 
is in the row space of~$M$.
\end{prop}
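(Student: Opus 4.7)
Let $V \subseteq K^n$ denote the row space of $M$. The proof splits into two inclusions. The easier direction $\{\val(v) : v \in V\} \subseteq L_p$ I would handle as follows. Given $v = a \cdot M \in V$ and any $(d{+}1)$-subset $\tau$ of $\{1, \ldots, n\}$, form the $(d{+}1) \times (d{+}1)$ matrix whose first $d$ rows are the rows of $M$ restricted to the columns in $\tau$, and whose last row is $v|_\tau$. This matrix has rank at most $d$, so its determinant vanishes. Laplace expansion along the last row produces the Pl\"ucker-type identity
\[
\sum_{i=1}^{d+1} (-1)^{i+1}\, v_{\tau_i}\, \det(M_{\tau \setminus \{\tau_i\}}) \,=\, 0.
\]
Since a sum of elements of $K$ vanishes only if the minimum valuation among the summands is attained at least twice, we conclude that $\min_i\bigl(\val(v_{\tau_i}) + p(\tau \setminus \{\tau_i\})\bigr)$ is attained at least twice. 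Setting $x = \val(v)$, this is precisely the circuit condition defining $L_p$.

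For the reverse inclusion $L_p \cap \Z^n \subseteq \{\val(v) : v \in V\}$ the plan is to construct a lift. Fix $x \in L_p$ and choose a $d$-subset $B$ of $\{1,\ldots,n\}$ that is a basis of the matroid, i.e.\ $p(B) < \infty$. Any $v \in V$ is determined by its restriction $v|_B$ via the Cramer-type identity
\[
v_j \,=\, \sum_{i \in B} \pm v_i\, \det(M_{(B \setminus \{i\}) \cup \{j\}}) \bigm/ \det(M_B) \qquad (j \notin B).
\]
I would pick $v_i \in K$ with $\val(v_i) = x_i$ and \emph{generic} leading coefficients for each $i \in B$, then verify that the resulting $\val(v_j)$ equals $x_j$ for every $j \notin B$. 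The circuit condition for $\tau = B \cup \{j\}$ asserts that the minimum of the set $\{x_j + p(B)\} \cup \{x_i + p((B\setminus\{i\})\cup\{j\}) : i \in B\}$ is attained at least twice, which (together with the Cramer formula) forces $\val(v_j) \geq x_j$ and yields equality as soon as the $j$-term ties for the minimum, because then genericity of the leading coefficients of the $v_i$ prevents cancellation.

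The main obstacle is the case in which the $j$-term strictly exceeds the circuit minimum, so two $i$-terms must cancel at a lower valuation. One route is to choose the basis $B$ adaptively per $j$, using the Blue Rule from Theorem~\ref{thm:blue-red} to locate a $\sigma$ that realises the relevant minimum, thereby ensuring that the $j$-term always ties. A more conceptual alternative is to invoke Kapranov's theorem in the linear-space setting: the ideal of $V$ in $K[y_1,\ldots,y_n]$ is generated by the Pl\"ucker-type linear forms appearing in the easy direction above, and the tropicalisation of a principal ideal equals its tropical hypersurface; combined with connectedness of the tropicalisation through codimension one, this upgrades the set-theoretic inclusion to an equality. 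Surjectivity of $\val : K^* \to \Z$ then lets one realise every lattice point of $L_p$ as the valuation of an explicit element of $V$.
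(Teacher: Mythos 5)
The paper offers no proof of this proposition; it is quoted from Speyer--Sturmfels \cite[Theorem 2.1]{SS}, so your attempt can only be judged on its own terms. Your first inclusion (Laplace expansion of the singular $(d{+}1)\times(d{+}1)$ matrix, then ``a vanishing sum in $K$ attains its minimal valuation at least twice'') is correct and is the standard argument. The reverse inclusion, however, has a genuine gap exactly at the point you flag, and neither of your proposed repairs works as stated. If the $j$-term $x_j+p(B)$ strictly exceeds $\min_{i\in B}\bigl(x_i+p((B\setminus\{i\})\cup\{j\})\bigr)$, then a generic choice of leading coefficients on $v|_B$ gives $\val(v_j)<x_j$, and achieving the needed cancellation to the right order is a nontrivial constraint, not a genericity statement. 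Choosing the basis ``adaptively per $j$'' is not available: the vector $v$ is globally determined by $v|_B$, so its coordinates on any other basis are no longer free or generic, and you cannot impose independent choices for different $j$. The ``conceptual alternative'' is essentially circular: invoking Kapranov/the fundamental theorem for the row space tells you that the tropicalization of $V$ is the closure of $\val(V)$, but identifying that tropicalization with the circuit prevariety $L_p$ is precisely the content of the cited theorem (equivalently, that the circuits form a tropical basis, e.g.\ because they are a universal Gr\"obner basis of the linear ideal); also the linear ideal is not principal, and ``connectedness through codimension one'' is not the relevant tool.

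The lifting argument can be saved by choosing the basis correctly once and for all: take $B$ to minimize $p(B)-\sum_{i\in B}x_i$ over all bases (these are the bases of the matroid $M_u$ attached to $x$ in Section~4 of the paper). Minimality gives $x_j+p(B)\le x_i+p((B\setminus\{i\})\cup\{j\})$ for every $i\in B$ and $j\notin B$, so the $j$-term always attains the circuit minimum for $\tau=B\cup\{j\}$; the ``attained at least twice'' hypothesis then forces some $i$-term to tie with it, and your generic-leading-coefficient argument applied to the Cramer expansion yields $\val(v_j)=x_j$ for all $j\notin B$ simultaneously (only finitely many nonvanishing linear conditions on the leading coefficients of $v|_B$ are needed, and $\C$ is infinite). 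With that modification your construction produces an honest $v$ in the row space with $\val(v)=x$ for every lattice point $x$ with finite coordinates; points of $\overline\TP^{n-1}$ with some coordinates equal to $\infty$ (where the lift must satisfy $v_j=0$) need a separate, though routine, treatment that your sketch omits.
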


Since $L_p$ is a tropical lattice polytope, the standard triangulation of $\overline{\TP}^{n-1}$ restricts to a
triangulation of $L_p$.  We shall present a self-contained proof of the following result.

\begin{theorem}\label{thm:membrane} {\rm (Keel and Tevelev \cite[Theorem 4.15]{KT})} Let $ \,M = (f_1,
  f_2,\ldots,f_n)\,$ be a $d \times n$-matrix of rank $d$ over $K$, and let $L_p $ be the associated tropical linear
  space. Then
  \[
  \Psi_M \,:\,    R \bigl\{ z^{-u_1} f_1,\, 
  z^{-u_2} f_2,\, \ldots ,\,z^{-u_n} f_n \bigr\}  \, \mapsto \, \pi_{L_p}(u_1,u_2,\dots,u_n)
  \]
  is a well-defined map, and it induces an isomorphism of simplicial complexes between the membrane $[M]$ and the
  standard triangulation of $L_p$.
\end{theorem}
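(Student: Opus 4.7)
The plan is to reduce the theorem to a single \textbf{key claim}: for each $u \in \Z^n$, the lattice $\Lambda_u := R\{z^{-u_1}f_1,\dots,z^{-u_n}f_n\}$ equals $\Lambda_{\pi_{L_p}(u)}$. Granted this, the map $\Psi_M$ is well-defined on equivalence classes (two exponent vectors $u,u'$ with $\Lambda_u = \Lambda_{u'}$ force $\pi_{L_p}(u) = \pi_{L_p}(u')$); injective (same projection implies same lattice, since the projection is determined by the lattice via the claim); surjective onto the lattice points of $L_p$ (each $w \in L_p \cap \Z^n$ is fixed by $\pi_{L_p}$ by Proposition~\ref{prop:pi}, and is hit by $\Lambda_w$); and compatible with scalar equivalence, since the shift $u \mapsto u + c(1,\dots,1)$ corresponds to $\Lambda_u \mapsto z^{-c}\Lambda_u$, which is exactly the equivalence defining $\overline{\TP}^{n-1}$.

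Proving the key claim is the main obstacle. The inclusion $\Lambda_u \subseteq \Lambda_w$, with $w := \pi_{L_p}(u)$, is immediate: since $w \geq u$ coordinatewise, each $z^{-u_i}f_i = z^{w_i - u_i} \cdot z^{-w_i}f_i$ lies in $\Lambda_w$ because $z^{w_i - u_i} \in R$. The nontrivial direction is to exhibit $z^{-w_i}f_i \in \Lambda_u$ for every $i$, and here the Blue Rule formula (\ref{minmaxred}) is tailor-made. Fix $i$ and let $\sigma^* = \sigma^*(i)$ be a $(d{-}1)$-subset attaining the outer minimum, chosen so that $p(\sigma^* \cup \{i\}) < \infty$ (such $\sigma^*$ exists because $M$ has full rank~$d$). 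Set $\omega^* := \sigma^* \cup \{i\}$, so that $\{f_k : k \in \omega^*\}$ is a $K$-basis of $K^d$. By Cramer's rule, each $f_j$ with $j \notin \omega^*$ expands as $\sum_{k \in \omega^*} c_{kj}f_k$ with $\val(c_{kj}) = p(\omega^*[k \to j]) - p(\omega^*)$. The inner ``$\max$'' of (\ref{minmaxred}) yields $w_i \leq p(\sigma^* \cup \{i\}) - p(\sigma^* \cup \{j\}) + u_j$ for every $j \notin \sigma^*$, and these are precisely the inequalities needed to realize $z^{-w_i}f_i$ as an $R$-linear combination of the $z^{-u_k}f_k$'s once the basis expansion is combined with the circuit relations among the $f_j$'s. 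The hard part is the bookkeeping to ensure that all valuations line up simultaneously, which is exactly what the min-over-$\sigma$ in (\ref{minmaxred}) guarantees.

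To upgrade the vertex-set bijection to an isomorphism of simplicial complexes, observe that both $[M]$ (as a subcomplex of the flag complex $\BT_d$) and the standard triangulation of $L_p$ (by Theorem~\ref{thm:triang}) are flag complexes, so it suffices to match edges. Two classes of lattices are adjacent in $\BT_d$ iff they admit representatives satisfying $z\Lambda' \subseteq \Lambda \subseteq \Lambda'$; by the key claim, we may take these representatives of the form $\Lambda_u$ and $\Lambda_{u'}$ with $u = \pi_{L_p}(u)$ and $u' = \pi_{L_p}(u')$, and the chain condition translates into the statement that, after a suitable tropical scalar shift, $u' - u$ is a $0/1$-vector different from $0$ and $(1,\dots,1)$. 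This is exactly the adjacency relation $\delta(u,u') = 1$ in the standard triangulation of $\overline{\TP}^{n-1}$ under the metric (\ref{metric}), which completes the proof.
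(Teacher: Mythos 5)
There is a genuine gap, and it sits exactly where your key claim is weaker than what the theorem needs. You reduce everything to $\Lambda_u=\Lambda_{\pi_{L_p}(u)}$, but the paper's proof establishes the stronger identity \eqref{redmodule}: $\pi_{L_p}(u)_i=\max\{\mu\in\Z : z^{-\mu}f_i\in\Lambda_u\}$, and the ``$\geq$'' (maximality) half of that identity is indispensable. Your deduction of well-definedness (``$\Lambda_u=\Lambda_{u'}$ forces $\pi_{L_p}(u)=\pi_{L_p}(u')$'') does not follow from your claim alone: a priori two distinct lattice points $w<w'$ of $L_p$ could represent the same lattice, and then $u=w$, $u'=w'$ would violate well-definedness while being perfectly consistent with $\Lambda_u=\Lambda_{\pi(u)}$. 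The same maximality is what you silently use in the edge-matching step, when you pass from $\Lambda_u\subseteq\Lambda_{u'}$ (i.e.\ $z^{-u_i}f_i\in\Lambda_{u'}$) to the coordinate inequalities $u\leq u'$, and it is also what guarantees that distinct non-proportional points of $L_p$ give inequivalent lattices, so that your $0/1$-vectors really produce edges of $\BT_d$. Proving maximality is not formal: the paper does it by invoking Lemma~\ref{lem:membrane} (the membrane is the union of the rank-$d$ apartments), so that $z^{-\mu}f_i\in\Lambda_u$ can only arise from an expansion of $f_i$ over $d$ of the generators, which bounds $\mu$ by the projection value. Nothing in your proposal plays this role.

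The half you do attempt, $z^{-w_i}f_i\in\Lambda_u$ for $w=\pi_{L_p}(u)$, is also not carried through. If $\sigma^*$ attains the outer minimum in the Blue Rule \eqref{minmaxred}, then $w_i$ \emph{equals} the inner maximum, so you get $w_i\geq p(\sigma^*\cup\{i\})-p(\sigma^*\cup\{j\})+u_j$ for each $j\notin\sigma^*$ --- the inequality you wrote goes the wrong way. Moreover, what membership of $z^{-w_i}f_i$ in $\Lambda_u$ actually requires is an expansion of $f_i$ over a basis $\omega$ \emph{not} containing $i$, giving by Cramer's rule the conditions $w_i\leq p((\omega\setminus\{k\})\cup\{i\})-p(\omega)+u_k$ for all $k\in\omega$; these involve $d$ different $(d{-}1)$-sets $\omega\setminus\{k\}$, not the single $\sigma^*$ of your setup, and expanding the $f_j$ with $j\notin\omega^*$ over a basis containing $f_i$ does not produce such a representation. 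This is precisely the ``bookkeeping'' you defer, and it is the content of the step rather than a routine verification. The paper avoids the issue by using the Red Rule of Theorem~\ref{thm:blue-red}: the $(d{+}1)$-set $\tau\ni i$ realizing $w_i=u_i+\gamma_{\tau,i}$ hands you the basis $\{f_{\tau_1},\dots,f_{\tau_d}\}$ and the equality $u_i+\gamma_{\tau,i}=\min_j(\val(p_j)+u_{\tau_j})$, which makes all valuations nonnegative at once in \eqref{frep}. As it stands, both the containment $\Lambda_{\pi(u)}\subseteq\Lambda_u$ and the maximality statement are missing, so the proposal does not yet constitute a proof.
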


\begin{proof}
  Consider any lattice $\,\Lambda =
  R \bigl\{ z^{-u_1} f_1,\,   z^{-u_2} f_2,\, \ldots ,\,z^{-u_n} f_n \bigr\}  \,$
   in the membrane, and set
  $(v_1,v_2,\ldots,v_n) = \pi_{L_p} (u_1,u_2,\ldots,u_n)$.  We claim that
  \begin{equation}\label{redmodule}
    v_i \quad = \quad \max \{ \, \mu \in \Z \,: \, z^{-\mu} f_i \in \Lambda \,\} . 
 \end{equation}
 We first prove the inequality ``$\leq $''. By the Red Rule in Theorem~\ref{thm:blue-red}, we have $v_i =
 \gamma_{\tau,i} + u_i$ for some $(d+1)$-set $\tau$ containing $i$. We may assume $\tau_{d+1} = i$. 
 Then
 $\{f_{\tau_1}, \dots,f_{\tau_d}\}$ is a basis of $K^d$, and we can write 
 \[
 \quad f_i \quad = \quad p_1 f_{\tau_1} + p_2 f_{\tau_2} + \cdots + p_d f_{\tau_d} \qquad \quad 
 \hbox{for some} \,\, p_1,\ldots,p_d \in K.
 \]
 Our choice of the $(d+1)$-set $\tau$ in the Red Rule means that
 \[
 u_i + \gamma_{\tau,i} \quad = \quad \min \{\,\val(p_j) + u_{\tau_j} \,:\, j = 1,2,\dots,d \,\} \quad \geq \quad 0 \, ,
 \]
 and therefore
 \begin{equation} \label{frep}
 f_i z^{-u_i-\gamma_{\tau,i}} \,\,\, = \,\,\, p_1 z^{u_{\tau_1}} ( f_{\tau_1}z^{-u_{\tau_1}}) + \cdots + p_d
 z^{u_{\tau_d}} ( f_{\tau_d}z^{-u_{\tau_d}}) \,\, \in \,\, \Lambda \, .
 \end{equation}
 This proves the inequality ``$\leq $''.  The converse ``$\geq$'' holds because $z^{-\mu} f_i$ lies in~$\Lambda$ if and
 only it lies in the $R$-submodule spanned by $d$ of the $n$ generators, and a representation (\ref{frep}) is the only
 way this can happen. Indeed, by Lemma~\ref{lem:membrane}, the membrane $[M]$ is the union of the apartments
 $[(f_{\tau_1},\dots,f_{\tau_d})]$ for all $d$-subsets $\tau\subseteq\{1,2,\dots,n\}$.
 
 The identity (\ref{redmodule}) shows that the map $\Psi_M$ which takes the lattice 
 $ R \bigl\{ z^{-u_1} f_1,\dots,\,z^{-u_n} f_n \bigr\}$
 to the point $\pi_{L_p}(u_1,\dots,u_n)$ is well-defined, and is a bijection
 between  the membrane $[M]$ and the lattice points in the tropical linear space $L_p$. This bijection takes
 adjacent lattices to points of $\delta$-distance one in $L_p$ and conversely. Hence it induces an isomorphism between
 the flag simplicial complexes of these two graphs.
\end{proof}

\begin{example} \label{EightRays}
Let $d=2$, $n=8$ and let $\, M = (a,b,c,d,e,f,g,h)\,$
 be as in Example \ref{4leaftree}.
The valuated matroid $\,p\,$ of the matrix $\,M\,$ maps
pairs of columns to $\Z \cup \{\infty\} $ as follows:
\[ 
\begin{pmatrix}
aa & \! ab & \! ac & \cdots & ah \\
ab & \! bb & \! bc & \cdots & bh \\
ac & \! bc & \! cc & \cdots & ch \\
\vdots &  \vdots & \vdots & \ddots & \vdots \\
ah & \! bh & \! ch & \cdots & hh 
\end{pmatrix}
\mapsto
\begin{pmatrix}
 \infty & -7 & -2 &  \infty & -1 & -7 & -3 & -2 \\
                    -7 &  \infty & -3 & -5 & -1 & -8 & -4 & -3 \\
                      -2 & -3 &  \infty & 0 & 3 & -1 & 2 & 4 \\
                    \infty & -5 & 0 &  \infty & 1 & -5 & -1 & 0 \\
                      -1 & -1 & 3 &  1 &  \infty & -2 & 2 & 3 \\
                     -7 & -8 & -1 & -5 & -2 &  \infty & -3 & 0 \\
                      -3 & -4 & 2 & -1 & 2 & -3 &  \infty & 2 \\
     -2 & -3 &  4 &  0 &  3 &  0 &  2 &  \infty 
\end{pmatrix}
\]
The rows of this $8 {\times} 8$-matrix are the cocircuits $p(\sigma *)$ of the valuated matroid $p$.  They
represent seven distinct points in $\overline{\TP}^{7}\,$ (rows $1$ and $4$ give the same point).
The tropical linear space $L_p$ is the tropical convex hull of these seven points in $\overline{\TP}^{7}$.  This convex hull is the
tree depicted in Figure \ref{Fig2}.  A systematic algorithm for drawing such a tree $L_p$, given its valuated matroid
$p$, is the \emph{neighbor-joining method} from phylogenetics; see \cite[\S 6]{SS}. \qed
\end{example}

Theorem \ref{thm:membrane} states that every lattice point $(u_1,\ldots,u_n) $ in $ L_p$ uniquely represents a lattice
$\Lambda_u = R \bigl\{ z^{-u_1} f_1,\, \ldots ,\,z^{-u_n} f_n \bigr\}$ in the membrane $[M]$. The lattice $\Lambda_u$
specifies a matroid $M_u$ of rank $d$ on $\{1,2,\ldots,n\}$.  This is an ordinary (not valuated) matroid. The bases of
$M_u$ are the sets $\{\tau_1,\ldots,\tau_d\}$ such that $\,\{ z^{-u_{\tau_1}} f_{\tau_1},\, \ldots ,\,z^{-u_{\tau_d}}
f_{\tau_d} \}\,$ spans the lattice $\Lambda$. The matroid $M_u$ can be read off directly from the valuated matroid $p$ as follows: its bases are the $d$-sets $\tau$
such that the expression $\,p(\tau) - u_{\tau_1} - \cdots - u_{\tau_d}\,$ is minimal.  The set of all matroids $M_u$, as
$u$ ranges over the tropical linear space $L_p$, forms a \emph{matroid subdivision} of the matroid polytope of the
matrix $M$ over the field $K$.  This is the identification of tropical linear spaces with matroid subdivisions as
studied in \cite{Ka, Spe1}.

Our algorithm for Computational Problem A in Section 5 will output each lattice $\Lambda_u$ in
the min-convex hull as a pair $(u,M_u)$, where $u$ is a point in a 
tropical linear space $L_p$ and $M_u$ is
a matroid.  We saw this format already in Example \ref{4leaftree}. For instance, consider the point $u=(2, 0, 5,
4, 6, 0, 4, 5)$ listed there.  It lies the tropical line $L_p$ of Example \ref{EightRays}.  The rank $2$
matroid $M_u$ has the set of bases $\{ab, ac, ae, af, ag, ah, bd, cd, de, df, dg, dh\}$.

\smallskip

The classical notion of convexity in buildings in Remark~\ref{rem:convex-def}
is related to  tropical convexity as follows.
  For a chamber $C$ in $\BT_d$ let $\vertices(C)$ be its set of vertices.  Now
consider a set $\cC$ of chambers contained in some apartment $\cA$.  We identify $\cA$ with $\TP^{d-1}$ and we note that the
classical notion of a \emph{root} (or
\emph{half-apartment}) of $\cA$ agrees with our definition of a root in $\TP^{d-1}$ from Remark~\ref{rem:root}. We consider the following set of 
lattice points in $\TP^{d-1}$:
\[
\vertices(\cC) \ := \ \bigcup\SetOf{\vertices(C)}{C\in\cC}
\]
Our next result holds because the convex subsets of chambers in $\cA$ are intersections of
roots, or equivalently, intersections of $\cA$ with other apartments.
 See also Theorem~\ref{thm:alessandrini}.

\begin{prop}\label{prop:convex}
  A finite set $\cC$ of chambers in an apartment $\cA\cong\TP^{d-1}$ is convex if and only if $\vertices(\cC)$ is the set of lattice points in a tropical lattice polytope of the form (\ref{cell}).
  \end{prop}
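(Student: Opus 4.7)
My plan is to identify the apartment $\cA$ with $\TP^{d-1}$ under its standard triangulation (Theorem \ref{thm:triang})---so that chambers become the maximal simplices and their vertices become lattice points---and then to apply Remark \ref{rem:convex-def}, which characterizes convex sets of chambers in an apartment as intersections of roots. The crucial combinatorial bridge between the two sides of the claimed equivalence is the following: a chamber $C$ of the standard triangulation lies in a root $R = \{w : w_i - w_j \le u\}$ (with $u \in \Z$) if and only if every vertex of $C$ satisfies $w_i - w_j \le u$. I would verify this by observing that each wall $\{w_i - w_j = u\}$ with $u \in \Z$ is a codimension-one subcomplex of the standard triangulation---which one reads off directly from the explicit description of the maximal simplices given in Theorem \ref{thm:triang}---so no chamber is split by such a wall, and the position of a chamber relative to the wall is detected by any single vertex off it.

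For the forward direction, a convex $\cC$ equals an intersection of finitely many roots $R_1,\ldots,R_m$ by Remark \ref{rem:convex-def} (finiteness suffices because $\cC$ is finite). Writing $P = H_1 \cap \cdots \cap H_m$ for the corresponding halfspaces yields a cell of form (\ref{cell}). By the bridge, $\cC$ is precisely the set of chambers of the standard triangulation contained in $P$; Theorem \ref{thm:triang} then identifies $\vertices(\cC)$ with the vertex set of the induced triangulation of $P$, namely $P \cap \Z^{d-1}$.

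For the reverse direction, if $\vertices(\cC) = P \cap \Z^{d-1}$ for a cell $P$ of form (\ref{cell}), then $\cC$ is the set of chambers whose vertex sets lie in $P$, which by the bridge equals the intersection of the roots corresponding to the defining halfspaces of $P$, hence is convex. The main obstacle will be the combinatorial bridge itself; once it is in hand, both directions reduce to bookkeeping via Theorem \ref{thm:triang}. A secondary point worth checking is that every lattice point of $P$---including those on $\partial P$---actually appears as a vertex of some chamber contained in $P$, but the assertion in Theorem \ref{thm:triang} that the standard triangulation restricts to a triangulation of $P$ takes care of this automatically.
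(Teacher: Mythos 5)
Your route is the same as the paper's: the paper disposes of Proposition~\ref{prop:convex} with the single sentence preceding it, namely that convex sets of chambers in $\cA$ are exactly intersections of roots (Remark~\ref{rem:convex-def}), that roots of $\cA$ are the integral halfspaces $\smallSetOf{w}{w_i-w_j\le u}$ of Remark~\ref{rem:root}, and that by Theorem~\ref{thm:triang} the standard triangulation restricts to a triangulation of any cell of the form (\ref{cell}). Your ``combinatorial bridge'' is not really an extra obstacle: one direction is immediate since a chamber is the ordinary convex hull of its vertices and the constraint $w_i-w_j\le u$ is linear, and the other direction is precisely the assertion, already contained in the proof of Theorem~\ref{thm:triang}, that no simplex of the standard triangulation is cut by an integral wall $w_i-w_j=u$.

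One step of your reverse direction does not follow as written: from $\vertices(\cC)=P\cap\Z^{d-1}$ you conclude that $\cC$ is the set of \emph{all} chambers with vertices in $P$. That inference fails. In $\TP^{2}$, let $P$ be the cell (\ref{cell}) with $u_{21}=u_{23}=2$, $u_{31}=u_{32}=1$, $u_{12}=u_{13}=0$; in the chart $w_1=0$ this is the $2\times1$ rectangle, and it equals $\tconv\bigl((0,2,1),(0,0,1),(0,2,0)\bigr)$, hence is a tropical lattice polytope of the form (\ref{cell}). Its standard triangulation has four triangles, and the two non-adjacent triangles $\{(0,0,0),(0,0,1),(0,1,1)\}$ and $\{(0,1,0),(0,2,0),(0,2,1)\}$ already have all six lattice points of $P$ as their combined vertex set, yet they do not form a convex set of chambers (any minimal gallery between them passes through the two omitted triangles). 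So the ``if'' direction is not literally deducible from the hypothesis $\vertices(\cC)=P\cap\Z^{d-1}$ alone; note, however, that this looseness sits in the proposition's own wording rather than in your strategy, since the same example defeats the literal statement. The intended reading, made explicit in the sentence following the proposition, is that the convex sets of chambers are exactly the sets of maximal simplices of the standard triangulations of polytopes of the form (\ref{cell}); with that reading your reverse direction becomes the easy implication ``chambers with all vertices in $P$ lie in $P$, hence $\cC$ is an intersection of roots,'' and your argument coincides with the paper's. It would strengthen your write-up to state this reinterpretation explicitly rather than pass through the unjustified claim about $\cC$ being the full set of chambers in $P$.
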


Proposition \ref{prop:convex} implies that
the convex sets of chambers are precisely the maximal simplices in the standard triangulation of
those tropical lattice polytopes which are at the same time (possibly unbounded) ordinary convex polyhedra.  In other words,
 Proposition \ref{prop:convex} holds verbatim for infinite $\,\cC\,$ if $\,\TP^{d-1}\,$ is replaced by its
compactification $\,\overline{\TP}^{d-1}$.

\section{Convex hulls in the Bruhat--Tits building}\label{sec:algorithms}

\noindent
In this section and the next we present algorithmic implications of the theory developed 
so far. We begin with Computational Problem A: how to find min-convex hulls in $\BT_d$. 
The input is a
list of $s$ invertible $d \times d$-matrices $M_1,M_2,\dots,M_s$ with entries in the field $K = \C(\!(z)\!)$, each
representing the equivalence class of its column lattice $\Lambda_i = \image_R(M_i)$.

\subsection{The retraction of min-convex hulls to a membrane}

Let $M = (f_1, \dots, f_n) $ be any matrix in $ K^{d \times n}$ of rank $d$ and
let $[M]$ be the membrane in $\BT_d$ which is spanned by the $n$ column vectors of $M$.
There is a natural retraction $r_M$ from $\BT_d$ onto $[M]$ given by
\begin{equation}
\label{retraction}
r_M : \Lambda \mapsto (\Lambda \cap K\{f_1\}) + \cdots +  (\Lambda \cap K\{f_n\})
\end{equation}
This map restricts to the identity on the membrane $[M]$.

Let $V$ be the $d$-dimensional subspace of $K^{n}$ spanned by the rows of $M$, and let $p$ be its valuated matroid as in
formula (\ref{valuatedM}).  By Proposition \ref{prop:linear}, the tropicalization of the classical linear space $V$ over
the field $K$ equals the tropical linear space $L_p $.  The map $\Psi_M$ in Theorem~\ref{thm:membrane} allows us to
identify the lattice points in $L_p$ with the membrane $[M]$.

\begin{lemma}
\label{lem:3equiv}
Fix a membrane $[M]$ in $\BT_d$ and consider any lattice $\Lambda = \image_R(M_0)$ where $M_0 \in GL_K(d)$. Then the
following three lattice points in $\overline{\TP}^{n-1}$ coincide:
  \begin{enumerate}
  \item\label{item:membrane:1} $\Psi_M(r_M(\Lambda))$, where $\Psi_M$ is the bijection of Theorem~\ref{thm:membrane}
    between $[M]$ and the lattice points in $L_p$,
 \item\label{item:membrane:2} $(N_{\Lambda}(f_1), \dots, N_{\Lambda}(f_n))$, where $N_{\Lambda}$ is the integral
   additive norm corresponding to $\Lambda$,
  \item\label{item:membrane:3} the tropical sum 
  (coordinatewise minimum) of the rows of the matrix $\,\val(M_0^{-1}\cdot M)$.
  \end{enumerate}
\end{lemma}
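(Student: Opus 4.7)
The plan is to prove the two equalities $(2)=(3)$ and $(1)=(2)$ separately; both are essentially unwindings of definitions once the right reformulations of $r_M$ and $\Psi_M$ are in place.

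First I would handle $(2)=(3)$, which is almost immediate. By the second formula in (\ref{norm}), $N_\Lambda(f_i) = \min(\val(M_0^{-1} f_i))$, and this minimum is taken over the entries of the $i$-th column of $\val(M_0^{-1}\cdot M)$. The $i$-th coordinate of the coordinatewise minimum of the rows of $\val(M_0^{-1}\cdot M)$ is exactly this column minimum, so the two vectors agree entry by entry.

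The work lies in $(1)=(2)$. The first step is to compute the retraction $r_M(\Lambda)$ explicitly: since $K\{f_i\}$ is one-dimensional, the rank-one $R$-module $\Lambda \cap K\{f_i\}$ is generated by $z^{-u_i}f_i$ where $u_i = \max\{\mu \in \Z : z^{-\mu}f_i \in \Lambda\} = N_\Lambda(f_i)$. Hence
\[
r_M(\Lambda) \ = \ R\bigl\{z^{-u_1}f_1,\ldots,z^{-u_n}f_n\bigr\} \qquad \text{with } u_i = N_\Lambda(f_i).
\]
By definition of $\Psi_M$ in Theorem~\ref{thm:membrane}, we then have $\Psi_M(r_M(\Lambda)) = \pi_{L_p}(u_1,\ldots,u_n)$. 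Now I invoke the identity (\ref{redmodule}) from the proof of Theorem~\ref{thm:membrane}: the $i$-th coordinate of $\pi_{L_p}(u_1,\ldots,u_n)$ equals $\max\{\mu \in \Z : z^{-\mu}f_i \in r_M(\Lambda)\} = N_{r_M(\Lambda)}(f_i)$.

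It remains to compare $N_{r_M(\Lambda)}(f_i)$ with $N_\Lambda(f_i)$. The inclusion $r_M(\Lambda) \subseteq \Lambda$ is immediate from (\ref{retraction}), giving $N_{r_M(\Lambda)}(f_i) \leq N_\Lambda(f_i)$. Conversely, by construction $z^{-N_\Lambda(f_i)}f_i$ lies in $\Lambda \cap K\{f_i\}$, which is one of the summands of $r_M(\Lambda)$, so $N_{r_M(\Lambda)}(f_i) \geq N_\Lambda(f_i)$. Combining these yields equality, and therefore $\Psi_M(r_M(\Lambda)) = (N_\Lambda(f_1),\ldots,N_\Lambda(f_n))$, proving $(1)=(2)$.

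There is no substantive obstacle here; the only slightly delicate point is recognizing that the identity (\ref{redmodule}), which was the main computation inside the proof of Theorem~\ref{thm:membrane}, can be re-read as the statement $\Psi_M(\Lambda')_i = N_{\Lambda'}(f_i)$ for every $\Lambda' \in [M]$. Once this reading is made, the whole lemma collapses to two lines plus the observation that $r_M(\Lambda)$ has the same $f_i$-norms as $\Lambda$.
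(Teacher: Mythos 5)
Your argument is correct and follows the same route the paper takes: the paper's (terse) proof derives $(1)=(2)$ from the definitions of $N_\Lambda$ and $r_M$ together with the identity (\ref{redmodule}) from the proof of Theorem~\ref{thm:membrane}, and $(2)=(3)$ from equation (\ref{norm}), which is exactly what you have spelled out in detail. Your expanded version — computing $\Lambda\cap K\{f_i\}=R\,z^{-N_\Lambda(f_i)}f_i$ and checking $N_{r_M(\Lambda)}(f_i)=N_\Lambda(f_i)$ — is a faithful and complete filling-in of those same steps.
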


\begin{proof}
  The equivalence of \eqref{item:membrane:1} and \eqref{item:membrane:2} follow from the definitions of $N_\Lambda$ and $r_M$, and from equation (\ref{redmodule}).  The equivalence of
  \eqref{item:membrane:2} and \eqref{item:membrane:3} follows from equation \eqref{norm}.
  \end{proof}
  
As a consequence, we get the following explicit description of the retraction of a min-convex hull onto a membrane. This establishes the correctness of Algorithm \ref{algo:retract} below.

\begin{prop}
\label{prop:Algo1}
Let $\Lambda_1,\Lambda_2, \ldots,\Lambda_s$ be the lattices spanned by the columns of 
the matrices $M_1, M_2, \dots, M_s \in GL_d(K)$. Let $[M]$ be any membrane in $\BT_d$.
The simplicial complex
\[r_M(\minconv(\Lambda_1,\Lambda_2, \ldots,\Lambda_s)) \subset [M] \,\]
coincides with the standard triangulation of the tropical polytope
\[
\tconv \bigl(\Psi_{M}(r_M(\Lambda_1)),\Psi_{M}(r_M(\Lambda_2)),\dots,\Psi_{M}(r_M(\Lambda_s)) \bigr) \,\,\, \subset \, \, L_p \,=\,\val(\kernel(M)).
\]
\end{prop}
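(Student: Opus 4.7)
The plan is to translate the claim through the map $\Psi_M\circ r_M$ from lattice classes in $\BT_d$ to lattice points in $\overline{\TP}^{n-1}$, verify that this map carries min-convex operations to tropical operations, and then invoke Theorems~\ref{thm:membrane} and~\ref{thm:triang} to match the simplicial structures.

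First, by Lemma~\ref{lem:3equiv}(b), the composition $\Psi_M\circ r_M$ sends a lattice $\Lambda$ to the vector $(N_\Lambda(f_1),\ldots,N_\Lambda(f_n))\in\overline{\TP}^{n-1}$. Two operations intertwine under this map: intersection of lattices becomes pointwise minimum, since $N_{\Lambda_1\cap\Lambda_2}=\min(N_{\Lambda_1},N_{\Lambda_2})$ as recalled in Section~\ref{sec:BT}, and rescaling $\Lambda\mapsto c\Lambda$ shifts each $N_\Lambda(f_i)$ by $-\val(c)$, i.e., acts by tropical scalar multiplication. Taking iterated intersections of rescalings of the $\Lambda_i$'s and then applying $\Psi_M\circ r_M$ therefore produces exactly the lattice points in the tropical polytope $\tconv(v_1,\ldots,v_s)$, where $v_i:=\Psi_M(r_M(\Lambda_i))$.

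Second, I would assemble the simplicial structure. Theorem~\ref{thm:membrane} gives a simplicial isomorphism $\Psi_M$ between the membrane $[M]$ and the standard triangulation of $L_p$, carrying adjacency of lattices to $\delta$-distance one. Since both complexes are flag, the full simplicial matching is determined by the vertex correspondence established above. Theorem~\ref{thm:triang} guarantees that the standard triangulation of $\overline{\TP}^{n-1}$ restricts to a triangulation of the lattice polytope $\tconv(v_1,\ldots,v_s)$, so the right-hand side of the proposition is a well-defined simplicial subcomplex of the standard triangulation of $L_p$, and it pulls back under $\Psi_M$ to the simplicial complex on $r_M(\minconv(\Lambda_1,\ldots,\Lambda_s))$ inherited from $[M]$.

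The main obstacle is the surjectivity half of the vertex correspondence: every lattice point of $\tconv(v_1,\ldots,v_s)$ must be realized as $\Psi_M(r_M(\Lambda))$ for some $\Lambda\in\minconv(\Lambda_1,\ldots,\Lambda_s)$. Given such a point $u$, the definition of the tropical convex hull yields a representation $u=\min_k(v_{i_k}+\lambda_k\mathbf{1})$ with $\lambda_k\in\Z$; one then checks that the lattice $\bigcap_k z^{-\lambda_k}\Lambda_{i_k}$ lies in $\minconv(\Lambda_1,\ldots,\Lambda_s)$ and that its image under $\Psi_M\circ r_M$ is $u$, using the norm identities from Section~\ref{sec:BT} coordinatewise. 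The reverse containment and the preservation of adjacency are routine consequences of the same identities together with Theorem~\ref{thm:membrane}, completing the identification.
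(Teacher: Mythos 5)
Your proposal is correct and follows essentially the same route as the paper: both use Lemma~\ref{lem:3equiv} together with the identity $N_{(z^{-a}\Lambda)\cap(z^{-a'}\Lambda')}=\min(a+N_\Lambda,\,a'+N_{\Lambda'})$ to show that $\Psi_M\circ r_M$ carries intersections of rescaled lattices to tropical linear combinations of the $\Psi_M(r_M(\Lambda_i))$, and then invoke Theorem~\ref{thm:membrane} (with Theorem~\ref{thm:triang}) to match the simplicial structures. Your explicit treatment of the surjectivity of the vertex correspondence is a point the paper leaves implicit, but it is the same argument in substance.
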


\begin{proof}
By the definition of the integral additive norm $N_{\Lambda}$ in formula \eqref{norm}, we have
\[
N_{(z^{-a} \Lambda) \cap (z^{-a'} \Lambda')} \,\,= \,\,
\min\left( a+N_\Lambda, a' + N_{\Lambda'} \right).
\]
By Lemma \ref{lem:3equiv}, for any integers $a_1, a_2, \dots, a_s$,
the image under the map $\Psi_M$ of the retraction $\,
r_M(z^{-a_1}\Lambda_1 \cap \cdots \cap z^{-a_s}\Lambda_s) \,$
 coincides with the tropical linear combination
\[
( a_1 \odot \Psi_{M}(r_M(\Lambda_1)) ) \oplus \dots \oplus (a_s \odot \Psi_{M}(r_M(\Lambda_s)) ).
\]
The simplicial complex structure of $[M]$ coincides with the standard triangulation of the tropical linear space $L_p$,
which induces the simplicial complex structure on the lattice points in the tropical polytope.  Hence the retraction of
the min-convex hull onto the membrane coincides with the standard triangulation of the tropical polytope.
\end{proof}

\begin{algorithm}[htb]
  \dontprintsemicolon

  \KwIn{matrices $M_1,\ldots,M_s\in\GL_d(K)$ and a $d\times n$ matrix $M$ over $K$ with rank $d$}
  \KwOut{retraction $r_M(\minconv(\Lambda_1,\dots,\Lambda_s))$ onto the membrane $[M]$, \break
   where $\Lambda_i = \image_R(M_i)$ for $i = 1,\ldots,s$.}
  \For{$i\leftarrow 1,2,\dots,s$}{
    $\Psi_M(r_M(\Lambda_i))\leftarrow$ tropical sum of the rows of $\val(M_i^{-1}\cdot M)$\;
  }
  \Return $\,\,\tconv(\Psi_M(r_M(\Lambda_1)),\Psi_M(r_M(\Lambda_2)),\dots,\Psi_M(r_M(\Lambda_s)))$\;
  
\caption{Retraction of a min-convex hull in $\BT_d$ onto a given membrane. \label{algo:retract}}
\end{algorithm}

\begin{example} \label{ex:ProbA} (Illustration of  Algorithm \ref{algo:retract}) \ \ 
  We consider the three lattices $\Lambda_1, \Lambda_2,
  \Lambda_3$   in the Bruhat--Tits building $\BT_3$ 
  which are represented by the invertible 
  $3 \times 3$-matrices
  \[ M_1 \, = \, 
  \begin{pmatrix}
    1 &  z^5 &  z^{-3} \\
    z^4 &  z &  z^{-3} \\
    z^{-3} & z^2 &  z^{-3}
  \end{pmatrix}   ,  \;
  M_2 \, = \, 
  \begin{pmatrix}
    z^2 &  z^{-2} &  z^2 \\
    z^3  &  z^5  &  z^5 \\
    1  &  1  &  z^4
  \end{pmatrix}  , \;
  M_3 \, = \, 
  \begin{pmatrix}
    z^2&  z^{-1} &  z \\
    z^{-2} &  z^{-3} &  z^3 \\
    z^3 &  z &  1
  \end{pmatrix}  .
  \]
  Set $M:=(M_1,M_2,M_3)$. Then the vectors $\Psi_M(r_M(\Lambda_1))$, $\Psi_M(r_M(\Lambda_2))$, and
  $\Psi_M(r_M(\Lambda_3))$ are the precisely the rows of the $3 \times 9$-matrix $V$ in~\eqref{39matrix}. That matrix
  was analyzed in Examples \ref{ThreeNine} and \ref{ThreeNineAgain}.  Hence the tropical convex hull (of the rows) of
  $V$ is the tropical polygon $P$ in Figure~\ref{fig:polygon}.
  
  The $31$ lattices in $P$ are encoded by the $31$ lattice points in Figure \ref{fig:polygon}, or by the $31$ lattice
  vectors listed in Example \ref{ThreeNineAgain}.  If $u = (u_1,u_2,\dots,u_9) \in \Z^9$ is one these vectors then the
  corresponding lattice $\Lambda \subset K^3$ is generated by the nine columns of the $3 \times 9$-matrix
  \[
  M \cdot \diag(z^{-u_1},z^{-u_2},\ldots,z^{-u_9}) \, .
  \]
  The underlined coordinates of $u$ give the lexicographically first basis $\{i,j,k\}$ of the 
  matroid $M_u$. This writes $\Lambda$ as the column lattice of the matrix $ M \cdot
  \diag(z^{-u_i},z^{-u_j},z^{-u_k})$. \qed
\end{example}

\subsection{Computing min-convex hulls in $\BT_d$}

Algorithm \ref{algo:retract} would compute the min-convex hull in $\BT_d$ if we input a membrane that contains it.
Algorithm \ref{algo:A:minconvex} below iteratively finds such a membrane, starting from the membrane $[M]$ spanned by
the given generators of $\Lambda_1, \dots, \Lambda_s$.  The idea is to compute the retraction $P$ of the min-convex hull
onto $[M]$, to identify the fiber over every lattice in $P$, and then to enlarge our membrane by the fibers.

As seen in the proof of Proposition \ref{prop:Algo1}, each lattice in
the desired convex hull,
\[
z^{-a_1}\Lambda_1 \cap \cdots \cap z^{-a_s} \Lambda_s 
\,\,\in\,\, \minconv(\Lambda_1, \ldots, \Lambda_s) ,
\]
is mapped by the composition $\Psi_M \circ r_M$ to the tropical linear combination
\[
a_1 \odot \Psi_M(r_M(\Lambda_1)) \,\oplus\, \cdots 
\,\oplus\,  a_s \odot \Psi_M(r_M(\Lambda_s)) 
\,\,\in\,\, P.
\]
Our aim is to list all lattices in the fiber $\smallSetOf{ \Lambda \in \minconv(\Lambda_1, \ldots,
  \Lambda_s}{\Psi_M(r_M(\Lambda)) = v }$ over an lattice point $v \in P$.  There are infinitely many ways to write $v$
as an integer tropical linear combination of $\Psi_M(r_M(\Lambda_1)), \dots, \Psi_M(r_M(\Lambda_s))$.  However, since
the min-convex hulls in $B_d$ are finite, the fibers under the retraction are finite, too.  We can make sure that the
loop in step \ref{step:fiber} is finite, as follows.  For a fixed $v \in P$, let $C_v$ be the set of coefficients $a \in
\Z^s$ such that $v = \bigoplus_{i=1}^s \left( a_i \odot \Psi_M(r_M(\Lambda_i)) \right)$.  Then $C_v$ is a partially
ordered set with $a \leq b$ in $C_v$ if $a_i \leq b_i$ for all $i = 1,\ldots,s$.  This partial order is compatible with
the inclusion order on the fiber, i.e.~$a \leq b$ implies $\bigcap_{i=1}^s \left( z^{-a_i} \Lambda_i \right) \subseteq
\bigcap_{i=1}^s \left( z^{-b_i} \Lambda_i \right)$.  Note that if $a,b \in C_v$ then $a \oplus b \in C_v$, so there is a
unique minimal element in $C_v$.  Starting from the unique minimal element in $C_v$, we do a finite depth-first-search
on the Hasse diagram of $C_v$ to enumerate the fiber over $v$.  At every step, we increment a coordinate by $1$ if the
new lattice is strictly larger.  Otherwise, further incrementing that coordinate will not give us new lattices in the
fiber, so we abandon that branch and backtrack.  In this manner we reach all elements in the fiber without going through
an infinite loop.  As a byproduct, Algorithm \ref{algo:A:minconvex} produces a membrane $[M']$ which contains the
min-convex hull.

 \begin{algorithm}[htb]
  \dontprintsemicolon

  \KwIn{matrices $M_1,M_2,\ldots,M_s\in\GL_d(K)$}
  \KwOut{$\minconv(\Lambda_1,\ldots,\Lambda_s)$ in $\BT_d$, where $\Lambda_i = \image_R(M_i)$}

  \smallskip

  $M\leftarrow(M_1, \dots, M_s)\in K^{d\times ds}$\;
  $M' \leftarrow M$\;
  $P\leftarrow r_M(\minconv(\Lambda_1,\ldots,\Lambda_s))$, computed by Algorithm~\ref{algo:retract}

\lnl{step:integer}\ForEach{lattice point $v \in P$}{
                           $\Lambda \leftarrow  R\{z^{-v_j} f_j\}$ where $f_j$ is the $j^{th}$ column of $M$\;
\lnl{step:fiber}      \ForEach{$a \in \Z^s$ such that $v = \bigoplus_{i=1}^s \left( a_i \odot \Psi_M(r_M(\Lambda_i)) \right)$}{
        \If{ $\Lambda \subsetneq \bigcap_{i=1}^s \left( z^{-a_i} \Lambda_i \right)$  }{
          Augment the columns of $M'$ with minimal generators \break of $\bigcap_{i=1}^s \left( z^{-a_i} \Lambda_i \right)$
          that are not in $\Lambda$.\;
                }
        }
     }
     $P'\leftarrow r_{M'}(\minconv(\Lambda_1,\ldots,\Lambda_s))$, computed by Algorithm~\ref{algo:retract}

  \Return $P'$\;
  \caption{Min-convex hull in the Bruhat--Tits building $\BT_d$.\label{algo:A:minconvex}}
\end{algorithm}

\begin{figure}\centering
  \includegraphics[width=1 \linewidth]{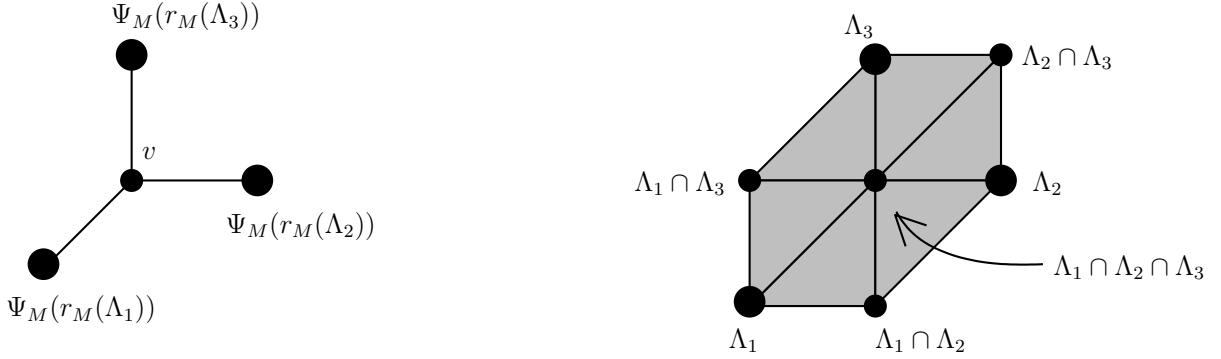}
  \caption{The two iterations of Algorithm \ref{algo:A:minconvex} 
  for $\Lambda_1,\Lambda_2,\Lambda_3$ as in Example \ref{ex:3lattices}.  }
 \label{fig:3lattices}
\end{figure}

\begin{example}
\label{ex:3lattices}
We illustrate Algorithm \ref{algo:A:minconvex}
by computing the min-convex hull of three points in the Bruhat--Tits building $\BT_3$.
The input points are given by the three invertible matrices
\[ M_1 \, = \, 
\begin{pmatrix}
  z & \! 0 & \! 0\\
  0 & \! 1 & \! 0 \\
  0 &\! 0 & \! 1
\end{pmatrix} \! \! ,  \;
M_2 \, = \, 
\begin{pmatrix}
  \! z & \! 1 & \! 1 \\
  \! 0  & \! 1  & \! 0 \\
  \! 0  & \! 0  & \! 1
\end{pmatrix} \!\! , \;
M_3 \, = \, 
\begin{pmatrix}
  \! z& \! 1 & \! 4 \\
  \! 0 & \! 2 & \! 5 \\
  \! 0 & \! 3 & \! 6
\end{pmatrix} \!\! .
\]
We start with the membrane spanned by $M = (M_1, M_2, M_3)$, and hence with
\[
\left(
\begin{array}{c}
\Psi_M (r_M(\Lambda_1)) \\
\Psi_M (r_M(\Lambda_2)) \\
\Psi_M (r_M(\Lambda_3)) \\
\end{array}
 \right) \,\,\, = \,\,\,
 \left(
 \begin{array}{ccccccccc}
 0&0&0&0&-1&-1&0&-1&-1\\
 0&-1&-1&0&0&0&0&-1&-1\\
 0&-1&-1&0&-1&-1&0&0&0
 \end{array}
 \right)
\]
The tropical convex hull of these three row vectors has precisely one more lattice point:
\begin{equation*}
\begin{array}{rcl}
v  &= &  (0,-1,-1,0,-1,-1,0,-1,-1)\\ & = &       \Psi_M (r_M(\Lambda_1))      \,\oplus \, \Psi_M ( r_M(\Lambda_2)) \\
& = &  \Psi_M (r_M(\Lambda_1))      \,\oplus \, \Psi_M ( r_M(\Lambda_3)) \\
& = & \Psi_M (r_M(\Lambda_2))      \,\oplus \, \Psi_M ( r_M(\Lambda_3)) \\
&= &  \Psi_M (r_M(\Lambda_1))  \,\oplus \, \Psi_M ( r_M(\Lambda_2)) \,\oplus \, \Psi_M ( r_M(\Lambda_3)) .      \end{array}
\end{equation*}
The set $C_v$ consists of the vectors
$(0,0,a), (0,b,0)$ and $(c,0,0)$ where $a,b,c \in \N$.
The unique  minimal element is $(0,0,0)$.
As its corresponding lattice $\,z R^3 = \Lambda_1 \cap \Lambda_2 \cap \Lambda_3\,$
lies in $[M]$, this point adds no new columns to $M'$.
Since $\Lambda_1 \cap \Lambda_2 \cap z^{-1} \Lambda_3 = \Lambda_1 \cap \Lambda_2 \cap z^{-2} \Lambda_3$, all  lattices $\Lambda_1 \cap \Lambda_2 \cap z^{-a} \Lambda_3$ are 
identical for $a \geq 1$. So we can abandon
 the branch $(0,0,a)$ in $C_v$ after $(0,0,1)$.  Similarly, we only need to consider up to $(0,1,0)$ and $(1,0,0)$.

After comparing $z R^3$ with the lattices $\Lambda_1 \cap \Lambda_2 \cap z^{-1}\Lambda_3$, $\Lambda_1 \cap z^{-1} \Lambda_2 \cap \Lambda_3$  and $ z^{-1}\Lambda_1 \cap \Lambda_2 \cap \Lambda_3$ respectively, we augment the columns of $M'$ with the three vectors:
\begin{equation*}
\begin{array}{rcl}
(0,1,-1) & \in &  (\Lambda_1 \cap \Lambda_2 \cap z^{-1}\Lambda_3) \setminus z R^3\\
(0,1,2) & \in &  (\Lambda_1 \cap z^{-1} \Lambda_2 \cap \Lambda_3) \setminus z R^3 \\
(3,2,1) & \in &   (z^{-1}\Lambda_1 \cap \Lambda_2 \cap \Lambda_3) \setminus z R^3 .\\
\end{array}
\end{equation*}

With this new matrix $M'$, the images of $\Lambda_i$ under the map $\Psi_{M'}$ become:
\[
\left(
\begin{array}{c}
\Psi_{M'}(\Lambda_1) \\
\Psi_{M'}(\Lambda_2) \\
\Psi_{M'}(\Lambda_3) \\
\end{array}
 \right) \,\, = \,\, 
 \left(
 \begin{array}{cccccccccccc}
 0&0&0&0&-1&-1&0&-1&-1&0&0&-1\\
 0&-1&-1&0&0&0&0&-1&-1&0&-1&0\\
 0&-1&-1&0&-1&-1&0&0&0&-1&0&0
 \end{array}
 \right)
\]
This new membrane $[M']$ contains all the lattices in the min-convex hull of $\Lambda_1$, $\Lambda_2$, and $\Lambda_3$.  The tropical convex hull of the three rows contains four other distinct lattice points:
\[
\begin{array}{rcl}
\Psi_{M'}(\Lambda_1 \cap \Lambda_2) & = & \Psi_{M'}(\Lambda_1) \oplus \Psi_{M'}(\Lambda_2), \\
\Psi_{M'}(\Lambda_1 \cap \Lambda_3) & = & \Psi_{M'}(\Lambda_1) \oplus \Psi_{M'}(\Lambda_3), \\
\Psi_{M'}(\Lambda_2 \cap \Lambda_3) & = & \Psi_{M'}(\Lambda_2) \oplus \Psi_{M'}(\Lambda_3), \\
\Psi_{M'}(\Lambda_1 \cap \Lambda_2 \cap \Lambda_3) & = & \Psi_{M'}(\Lambda_1) \oplus \Psi_{M'}(\Lambda_2) \oplus \Psi_{M'}(\Lambda_3).
\end{array}
\]
The simplicial complex $\,\minconv(\Lambda_1,\Lambda_2,\Lambda_3)\,$ is shown
on the right in Figure \ref{fig:3lattices}.
\qed
\end{example}

Algorithm \ref{algo:A:minconvex} solves Computational Problem A in the min-convex case.  Computing max-convex hulls
reduces to computing min-convex hulls, as shown in Algorithm \ref{algo:A:dualconvex}.

\begin{algorithm}[htb]
  \dontprintsemicolon

  \KwIn{matrices $M_1,M_2,\dots,M_s\in\GL_d(K)$}
  \KwOut{$\maxconv(\Lambda_1,\dots,\Lambda_s)$ in $\BT_d$, where $\Lambda_i = \image_R(M_i)$}

  \smallskip
  Run Algorithm \ref{algo:A:minconvex} with input matrices $M_1^{-T}, \dots, M_s^{-T}$.\;
  \Return $\minconv(\Lambda_1^*, \dots, \Lambda_s^*)$.
  \caption{Max-convex hull in the Bruhat--Tits building $\BT_d$.
    \label{algo:A:dualconvex}}
\end{algorithm}

The correctness of Algorithm \ref{algo:A:dualconvex} follows from Lemma \ref{lem:dual}, which implies that the
simplicial complex structure of the max-convex hull of $\Lambda_1, \ldots, \Lambda_s$ is identical to the simplicial
complex structure of the min-convex hull of $\Lambda_1^*, \ldots, \Lambda_s^*$.  Our procedure exhibits a matrix of
basis vectors for each lattice in $\,\minconv(\Lambda_1^*, \ldots, \Lambda_s^*)$.  We take the inverse transpose of that
matrix to get a basis matrix for the corresponding lattice in $\,\maxconv(\Lambda_1, \ldots, \Lambda_s)$.

\subsection{Implementations}

We now come to question of how our convex hull algorithms can be used in practice, and what implementations are within
reach.  We largely focus on the operator ``$\tconv$'' which is crucial in Algorithm~\ref{algo:retract}, which in turn is
called twice in Algorithm~\ref{algo:A:minconvex}.  Its output form (and hence also the form of the final output of the
algorithm) were left deliberately vague, as there are several choices for how ``$\tconv$'' can be realized.  Firstly,
there is a direct polyhedral approach for computing tropical convex hulls which is based on the following result from
\cite[Section~4]{DS}: The tropical convex hull of $n$ points in $\TP^{s-1}$ arises as the polyhedral complex of bounded
faces in an ordinary convex polyhedron defined by $ns$ linear inequalities in $\R^{n+s}$.  This method is implemented in
\texttt{polymake}~\cite{GJ}.  The details of this implementation together with extensive tests are the topic
of~\cite{HJP}.  Secondly, one can use the algebraic algorithm based on resolutions of monomial ideals which was
described in~\cite{BY}.  A \texttt{Macaulay2}/\texttt{Maple} implementation is available from the third author.  In the
planar case, $s=3$, specific techniques from computational geometry can be used to design alternative, faster
algorithms; see~\cite{J}.

In view of tropical polytope duality \cite[Theorem 23]{DS}, we can choose if we want to compute the tropical convex hull
of $n$ points in $\TP^{s-1}$ or of $s$ points in $\TP^{n-1}$.  If $s\le 3$ then, due to the specialized algorithms
mentioned above, it is easier to compute the tropical convex hull of $n$ points in $\TP^{s-1}$.  The output of both, the
polyhedral and the algebraic algorithms, returns a tropical polytope $P$ decomposed into cells as in (\ref{cell}).

Enumerating the lattices in Step \ref{step:integer} then requires to list all the lattice points in the ordinary
polytopes corresponding to the types.  In higher dimensions this can be an arduous task, due to the sheer size of the
output.  Hence, depending on the application intended, it may be advisable to stick with the output of the previous
stage as a compressed description of the set of lattices.  From each type we can read off the matroid $M_u$ which
specifies the set of apartments (spanned by the columns of $M$) containing that type.  In Example \ref{4leaftree}, these
matroids $M_u$ are the sets of pairs such as $\{af, bf, cf, df, ef, fg, fh\}$.  

\begin{table}[ht]\centering
  \caption{Timings in seconds for computations with ``$\tconv$''  in \texttt{polymake}.
  The parameters $d$ and $s$ indicate the
    size of the problem, that is, computing the min-convex hull of $s$ lattices represented by $d\times
    d$-matrices. $N$ is the number of samples tested, and the last four columns contain 
     basic statistics. \label{tab:timings}}
  \renewcommand{\arraystretch}{0.9}
  \begin{tabular*}{.9\linewidth}{@{\extracolsep{\fill}}rrrrrrr@{}}\toprule
    $d$ & $s$ & $N$ & mean & stddev & $\min$ & $\max$\\ \midrule
     3 &  2 & 50 & 0.18 & 0.02 & 0.15 & 0.21\\
     3 &  3 & 50 & 0.55 & 0.14 & 0.31 & 0.88\\
     3 &  4 & 50 & 2.02 & 0.94 & 0.68 & 5.47\\
     3 &  5 & 50 & 7.73 & 2.77 & 2.92 & 14.25\\
     3 &  6 & 50 & 18.27 & 8.21 & 5.40 & 45.78\\
     3 &  7 & 50 & 38.78 & 15.21 & 9.30 & 77.65\\
     3 &  8 & 50 & 69.39 & 23.21 & 30.02 & 124.05\\
     3 &  9 & 50 & 119.63 & 41.90 & 27.66 & 243.25\\
     3 & 10 & 50 & 231.17 & 111.22 & 71.89 & 594.95\\
     4 &  2 & 50 & 2.75 & 1.30 & 0.79 & 6.07\\
     4 &  3 & 50 & 62.79 & 42.54 & 12.20 & 178.97\\
     4 &  4 & 50 & 827.37 & 624.19 & 93.74 & 3017.19\\
     4 &  5 & 18 & 5994.15 & 4986.38 & 648.14 & 21018.16\\
     4 &  6 & 5 & 35823.43 & 21936.56 & 4846.15 & 67876.56\\
     4 &  7 & 5 & 28266.78 & 15773.94 & 9193.69 & 55891.92\\ \bottomrule
  \end{tabular*}
\end{table}

To give a sense of the running time of tropical convex hull code, in
Table~\ref{tab:timings} we list a few timings of \texttt{polymake} computations.  The samples were generated at random
from $s\times sd$-matrices with integer entries ranging from $0$ to $9$.  The algorithm uses the general polyhedral approach without the enumeration of lattice points.  The individual
timings vary quite a bit, and individual examples with smaller parameters may need more time than
other examples with larger parameters.
 Nonetheless, the reader should get an idea. For more comprehensive tests 
 we refer to \cite{HJP}.  Hardware: AMD
4200+X2, 4423 bogomips, 2GB main memory.  Software implemented in \texttt{polymake} 2.3 on SuSE Linux 10.0.

\section{Further Algorithms and Perspectives}\label{sec:perspectives}

\noindent
We now consider Computational Problem B: Determine the intersection of $s$ membranes.
The input consists of matrices $M_1,\ldots, M_s$, each having $d$ linearly independent rows over $K = \C(\!(z)\!)$.  Here $M_i$ represents the
membrane $[M_i]=[(f_{i1}, \dots, f_{id})]$, where $f_{ij}$ is the $j$th column of the matrix $M_i$.  The 
intersection $\,[M_1]\cap[M_2]\cap\dots\cap[M_s]\,$ is a locally finite simplicial complex of dimension $\leq d-1$. It
may be finite or infinite, depending on the input. We will compute this intersection as a tropical polytope over $( \R \cup \{\infty\} , \oplus, \odot)$.

Obviously,  $[M_1]\cap[M_2]\cap\dots\cap[M_s]$ is contained in the union
$[M_1]\cup[M_2]\cup\dots\cup[M_s]$, which in turn is contained in the membrane $\,[(M_1,M_2,\dots,M_s)]$.  By
Theorem~\ref{thm:membrane}, this membrane is isomorphic, as a simplicial complex, to the standard
triangulation of the tropicalization $L_p(M)$ of the row space of $M=(M_1,M_2,\dots,M_s)$.  In view of Theorem
\ref{thm:tropical_linear_space}, we may regard $L_p(M)$ as a polytope in the compactified tropical projective space
$\,\overline{\TP}^{sd-1}$.

Our computations take place inside this tropical linear space $L_p(M)$, which we represent as the tropical convex hull of
the cocircuits $\,p(\sigma *)\,$ that are derived from the matrix~$M$.  The $k$-th column vector $f_{ik}$ of the $i$-th
input matrix $M_i$ corresponds to the cocircuit $\,p(\sigma *)\,$ where $\sigma$ is the $(d{-}1)$-subset of
$\{1,2,\ldots,sd\}$ which indexes all columns of $M_i$ other than $f_{ik}$ inside $M$. 
This special cocircuit is abbreviated by
$\, C_{ik} \, := \, \val \bigl(\,\text{the $k$-th row $M_i^{-1} \cdot M$} \, \bigr) $.
  Consider the subpolytope of $L_p(M)$ spanned by
  the $d$ special cocircuits arising from $M_i$:
\[
L_p^M(M_i) \quad = \quad \tconv\{C_{i1}, \ldots, C_{id}\}.
\]
This tropical polytope with its standard triangulation is isomorphic to the membrane $[M_i]$.
Intersecting these subpolytopes $L_p(M_i)$ inside $L_p(M)$ 
solves Computational Problem $B$.
 
The intersections of arbitrary tropical polytopes are tropical polytopes again~\cite[Proposition 20]{DS}.  Here,
however, the situation is even easier since the subpolytope $L_p^M(M_i)$, as an ordinary polytopal complex, is a
subcomplex of $L_p(M)$.  We summarize our findings in Algorithm~\ref{algo:C:minconvex}.  Our remarks concerning the
output of Algorithm~\ref{algo:A:minconvex} apply accordingly.

\begin{algorithm}
  \dontprintsemicolon

  \KwIn{Matrices $M_1,M_2,\dots,M_s\in\GL_d(K)$}
  \KwOut{Intersection $[M_1]\cap[M_2]\cap\dots\cap[M_s]$ of membranes in $\BT_d$}

  \smallskip
  
  $M \leftarrow(M_1,M_2,\dots,M_s)$\;
  $C \leftarrow sd\times sd\text{-matrix of cocircuits of } M$\;
  $L_p(M) \leftarrow \tconv\{c_{11},\dots,c_{ss}\}$\;
  \For{$k\leftarrow 1,2,\dots,s$}{
    $L_p^M(M_i) \leftarrow \tconv\{c_{i1}, \dots, c_{id}\}$\;
  }
  $I\leftarrow\emptyset$\;
  \ForEach{cell $C$ in $L_p(M)$}{
    \If{$C\subseteq L_p^M(M_i)$ for all $i$}{
      $I\leftarrow I\cup C$\;
    }
  }
  \Return $ I $\;

  \caption{Intersection of membranes in the affine building $\BT_d$
\label{algo:C:minconvex}}
\end{algorithm}

We now examine the special case of Computational Problem B where each input matrix $M_i$ is square. Here our problem is
to compute the intersection of $s$ apartments in $\BT_d$.  Since apartments are both min- and max-convex, the
intersection of apartments is also min- and max-convex.  This establishes the connection between Computational Problem B
and the classical notion of convexity in Remark \ref{rem:convex-def}. The set of all chambers which are fully contained
in the intersection of apartments is convex in the sense of Remark \ref{rem:convex-def}.  Note that (the vertex set of)
every convex set of chambers within some apartment of $\BT_d$ arises in this manner, namely as the output of Algorithm
\ref{algo:C:minconvex} for some square matrices $M_1, \ldots,M_s$.  Identifying one of the apartments with $\TP^{d-1}$,
we see that the result of this computation is a subset of $\TP^{d-1}$ which is both min-convex and max-convex. This
implies that the intersection of apartments is an ordinary convex polytope of the special form (\ref{cell}).

Recent work of Alessandrini \cite{Ale} suggests the following alternative method this computation, which more efficient
than applying Algorithm \ref{algo:C:minconvex} to square matrices.  Our point of departure towards Alessandrini's method
is the following question: \emph{Given $M \in \GL_d(K)$, how can we decide whether the standard lattice $R^d$ lies in
  the apartment $[M]$, i.e. whether $R^d$ has an $R$-basis of the form $\{z^{a_1} f_1, z^{a_2}f_2,...., z^{a_d}f_d \}$
  for some integers $a_1,a_2,\ldots,a_d $?}

To answer this question, we compute the tropical $d \times d$-matrix
\begin{equation}
  \label{defE(M)}
  E(M) \quad := \quad   \val(M) \odot \val(M^{-1}) \, .
\end{equation}
Here $\odot $ means that the matrix product is evaluated in the min-plus algebra.  Note that each diagonal entry of
$E(M)$ is non-negative.  The following lemma is easy to derive:

\begin{lemma} \label{lem:liesIn}
 The following are equivalent for a matrix $M \in \GL_d(K)$:
 \begin{enumerate}
 \item The standard lattice $R^d$ lies in the apartment $[M]$.
 \item By scaling the columns of $M$ with powers of $z$, we can get a matrix $G$ in $R^{d \times d}$ whose constant term
   $G(0) \in \C^{d \times d}$ is invertible.
 \item Each entry $e_{ij}(M)$ of the matrix E(M) is non-negative.
 \end{enumerate}
\end{lemma}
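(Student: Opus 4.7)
The plan is to close the cycle (1)$\Rightarrow$(2)$\Rightarrow$(3)$\Rightarrow$(1). The first implication is essentially definitional. By definition of the apartment $[M]$, the lattice $R^d$ lies in $[M]$ iff there exist integers $u_j$ with $\{z^{-u_j}f_j\}_{j=1}^d$ an $R$-basis of $R^d$. Setting $a_j:=-u_j$ and $G:=M\cdot\diag(z^{a_1},\dots,z^{a_d})$, this is the statement that $G\in\GL_d(R)$. Since $R$ is a local ring with residue field $\C$, a matrix $G\in R^{d\times d}$ lies in $\GL_d(R)$ iff $\det G$ is a unit of $R$, iff $G(0)\in\GL_d(\C)$. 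This gives (1)$\Leftrightarrow$(2).

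For (2)$\Rightarrow$(3), use $G^{-1}=D^{-1}M^{-1}\in R^{d\times d}$ where $D=\diag(z^{a_1},\dots,z^{a_d})$. Reading entries of $M=GD^{-1}$ and $M^{-1}=DG^{-1}$ gives
\[
\val(M_{ij}) \geq -a_j \quad \text{and} \quad \val(M^{-1}_{jk}) \geq a_j
\]
for all $i,j,k$. Adding the two yields $\val(M_{ij})+\val(M^{-1}_{jk})\geq 0$ for every triple, and taking the minimum over~$j$ gives $e_{ik}(M)\geq 0$.

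The main content is (3)$\Rightarrow$(1). The key observation is that ``$\min_j X_j\geq 0$'' unpacks coordinate-wise: every $X_j$ must be $\geq 0$. Thus condition (3) is equivalent to $\val(M_{ij})+\val(M^{-1}_{jk})\geq 0$ for \emph{all} triples $(i,j,k)$. Minimizing the first summand over~$i$ and the second over~$k$ independently yields $\alpha_j+\beta_j\geq 0$ for all $j$, where $\alpha_j:=\min_i\val(M_{ij})$ and $\beta_j:=\min_k\val(M^{-1}_{jk})$. Now set $u_j:=\alpha_j$ and verify both inclusions of $R^d=R\{z^{-u_j}f_j\}$: the inclusion $R\{z^{-u_j}f_j\}\subseteq R^d$ follows from $\val(z^{-\alpha_j}M_{ij})=\val(M_{ij})-\alpha_j\geq 0$; the inclusion $R^d\subseteq R\{z^{-u_j}f_j\}$ uses the identity $e_i=\sum_j (M^{-1})_{ji}\, z^{\alpha_j}\cdot(z^{-\alpha_j}f_j)$ with coefficients of valuation $\alpha_j+\val((M^{-1})_{ji})\geq\alpha_j+\beta_j\geq 0$.

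There is no real obstacle here. The only small trick is recognizing that the non-negativity of the min in (3) sharpens to non-negativity of every term, which immediately furnishes the bounds $\alpha_j+\beta_j\geq 0$ needed to construct the witness $u_j=\alpha_j$ in (3)$\Rightarrow$(1); everything else is valuation bookkeeping and the standard local-ring fact that $G\in R^{d\times d}$ is invertible iff $G(0)$ is.
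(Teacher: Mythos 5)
Your proof is correct. The paper in fact states this lemma without proof (it is introduced with ``easy to derive''), so there is no argument to compare against; your chain (1)$\Leftrightarrow$(2) via the local-ring fact that $G\in R^{d\times d}$ is invertible over $R$ iff $G(0)\in\GL_d(\C)$, (2)$\Rightarrow$(3) by valuation bookkeeping, and (3)$\Rightarrow$(1) by unpacking $e_{ik}(M)=\min_j\bigl(\val(M_{ij})+\val((M^{-1})_{jk})\bigr)\geq 0$ into termwise inequalities and taking the witness $u_j=\alpha_j=\min_i\val(M_{ij})$ is precisely the intended straightforward derivation, and it correctly supplies the details the paper leaves to the reader.
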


We now change the question as follows. Let $u_1,\ldots,u_d$ be unknown integers.  Under what condition on these integers
is the scaled standard lattice $R \{ z^{u_1} e_1, ..., z^{u_d} e_d \}$ in the apartment $[M]$? This question is
equivalent to asking whether the standard lattice $R^d$ lies in the apartment $\,[\,\diag(z^{-u}) \cdot M\,]$, where
$\diag(z^{-u}) = \diag(z^{-u_1}, \ldots, z^{-u_d})$.  By applying Lemma \ref{lem:liesIn} to the matrix $\,\diag(z^{-u})
\cdot M\,$ in place of $M$, we obtain the following result.

\begin{cor} \label{cor:Ales}
  The lattice $R\{ z^{u_1} e_1, ..., z^{u_d} e_d \}$ lies in the apartment $[M]$ if and only if
  \begin{equation}
    \label{InversionDomain}
    u_j - u_i \,\,\leq \,\, e_{ij}(M) \qquad \hbox{for}\,\,\,
    i,j = 1,2,\ldots,d. 
  \end{equation}
\end{cor}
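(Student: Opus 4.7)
The plan is to reduce Corollary \ref{cor:Ales} directly to Lemma \ref{lem:liesIn}, applied to the rescaled matrix $M' := \diag(z^{-u_1}, \ldots, z^{-u_d}) \cdot M$, exactly as suggested by the sentence preceding the statement. What remains is to translate the entrywise non-negativity condition of Lemma \ref{lem:liesIn}(3), applied to $M'$, into the inequality (\ref{InversionDomain}).

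First, I would record the scaling identity. The lattice $R\{z^{u_1}e_1, \ldots, z^{u_d}e_d\}$ is $\diag(z^{u_1}, \ldots, z^{u_d}) \cdot R^d$, and left multiplication by the invertible diagonal matrix $\diag(z^u)$ carries the apartment $[M']$ bijectively to the apartment $[M]$ (it simply rescales each column representing an element of the apartment). Consequently, $R\{z^{u_1}e_1, \ldots, z^{u_d}e_d\}$ lies in $[M]$ if and only if $R^d$ lies in $[M']$.

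Next, I would expand $E(M') = \val(M') \odot \val((M')^{-1})$. Because $\diag(z^{-u})$ and $\diag(z^u)$ are diagonal with a single nonzero entry in each row and column, the matrix products
\[
M' \,=\, \diag(z^{-u})\cdot M \qquad\text{and}\qquad (M')^{-1} \,=\, M^{-1}\cdot \diag(z^u)
\]
involve no cancellation in the valuation, so
\[
\val(M')_{ik} \,=\, -u_i + \val(M)_{ik} \quad\text{and}\quad \val((M')^{-1})_{kj} \,=\, \val(M^{-1})_{kj} + u_j.
\]
Plugging these into the definition (\ref{defE(M)}) gives
\[
e_{ij}(M') \,=\, \min_k\bigl(-u_i + \val(M)_{ik} + \val(M^{-1})_{kj} + u_j\bigr) \,=\, (u_j - u_i) + e_{ij}(M).
\]
By Lemma \ref{lem:liesIn}, the lattice $R^d$ lies in $[M']$ if and only if $e_{ij}(M') \geq 0$ for all $i,j$, which, rearranged and possibly relabeled, is exactly the condition (\ref{InversionDomain}).

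There is no real obstacle here; the only point requiring minor care is the compatibility of $\val$ with multiplication by a diagonal matrix of monomials, which is automatic since each relevant matrix entry is a single monomial times an entry of $M$ or $M^{-1}$. Thus the corollary reduces to a single explicit computation of $E(M')$ combined with a direct invocation of Lemma \ref{lem:liesIn}(3).
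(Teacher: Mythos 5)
Your proposal is correct and is essentially the paper's own argument: the paper derives Corollary~\ref{cor:Ales} precisely by observing that $R\{z^{u_1}e_1,\dots,z^{u_d}e_d\}\subset[M]$ is equivalent to $R^d\subset[\diag(z^{-u})\cdot M]$ and then applying Lemma~\ref{lem:liesIn} to $\diag(z^{-u})\cdot M$, exactly as you do, with the same computation $e_{ij}(\diag(z^{-u})M)=e_{ij}(M)+u_j-u_i$. One small caveat: your nonnegativity condition literally reads $u_i-u_j\leq e_{ij}(M)$, which agrees with \eqref{InversionDomain} only after transposing the indices of $E(M)$ (a discrepancy in the printed statement rather than in your argument), so your phrase ``possibly relabeled'' is doing real work there and is worth making explicit.
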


The linear inequalities \eqref{InversionDomain} in the unknowns $u_1,\ldots,u_d$ defines a convex subset of $\TP^{d-1} $ which is both an ordinary polytope and a tropical polytope.  Corollary \ref{cor:Ales} is
essentially equivalent to Theorem 4.7 in \cite{Ale}. Alessandrini refers to the polytope (\ref{InversionDomain}) as the
\emph{inversion domain} associated with the tropical matrix product in (\ref{defE(M)}); see \cite[Proposition 3.4]{Ale}.
We conclude that the intersection of the two apartments $[M]$ and $[\diag(1,\ldots,1)]$ equals the standard
triangulation of the inversion domain, which is specified by the inequalities (\ref{InversionDomain}).

We now present our second method, to be called \emph{Alessandrini's Algorithm},
for Computational Problem B in the special case of apartments.
The input consists of $s$ invertible matrices $M_1,M_2,\dots,M_s$ over $K$,
and the output is the intersection $[M_1]\cap\dots\cap[M_s]$ of apartments.
After multiplying each matrix on
the left by $M_1^{-1}$, we may assume that $M_1$ is the identity matrix.  Then the desired intersection is the standard
triangulation of the polytope specified by the inequalities (\ref{InversionDomain}) where $M$ runs over
$\{M_2,\dots,M_k\}$.  Alessandrini's Algorithm is summarized by the following refinement of
Proposition~\ref{prop:convex}.

\begin{theorem}\label{thm:alessandrini}
  The intersection of apartments $[M_1] \cap \cdots \cap [M_s]$ in the Bruhat--Tits building $\BT_d$ is the standard
  triangulation of a polytope of the form (\ref{cell}), namely, the polytope
  \[
  \SetOf{ u \in \TP^{d-1} }{ u_j - u_i \leq e_{ij}(M_k) \text{ for } i,j = 1,\dots,d \text{ and } k = 2,\dots,s } \, .
  \]
\end{theorem}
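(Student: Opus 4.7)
My plan is to reduce to the case where the first apartment is the standard one, apply Corollary~\ref{cor:Ales} to each of the remaining matrices, and then verify that the intersection of the resulting lattice point sets carries the correct simplicial structure.

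\textbf{Reduction step.} Left multiplication by any element of $\GL_d(K)$ acts on $K^d$ by a $K$-linear isomorphism, hence sends lattices to lattices, preserves inclusions and equivalence classes, and therefore induces a simplicial automorphism of $\BT_d$. In particular, multiplying by $M_1^{-1}$ sends each apartment $[M_k]$ to $[M_1^{-1}M_k]$, and sends $[M_1]$ to $[I]$, the standard apartment. Since the inversion domain polytope in the statement is computed from $M_k$ only through the tropical matrix $E(M_k)$ and we are free to replace $M_k$ by $M_1^{-1}M_k$ at the cost of relabeling, we may assume $M_1=I$ from the outset.

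\textbf{Vertex identification via Corollary~\ref{cor:Ales}.} Under this reduction, the vertices of $[I]$ are precisely the equivalence classes of lattices $R\{z^{u_1}e_1,\ldots,z^{u_d}e_d\}$ parametrized by $u\in\Z^d/\Z(1,\ldots,1)\cong \Z^{d-1}$, i.e.\ the lattice points of $\TP^{d-1}$. Such a vertex lies in $[M_1]\cap\cdots\cap[M_s]$ if and only if it lies in each $[M_k]$ for $k=2,\ldots,s$. By Corollary~\ref{cor:Ales} applied to each $M_k$, this is equivalent to the simultaneous system
\[
u_j - u_i \ \leq \ e_{ij}(M_k) \qquad \text{for all } i,j \in \{1,\ldots,d\} \text{ and all } k\in\{2,\ldots,s\}.
\]
Thus the vertex set of the intersection is exactly the set of lattice points of the polytope $Q$ described in the theorem, which is of the form \eqref{cell}.

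\textbf{From vertices to the standard triangulation.} It remains to check that as a simplicial subcomplex of $\BT_d$, the intersection coincides with the standard triangulation of $Q$. Two facts suffice. First, inside the standard apartment $[I]$, the induced simplicial complex on the lattice points of $\TP^{d-1}$ is exactly the standard triangulation of $\TP^{d-1}$ from Theorem~\ref{thm:triang}: adjacency of two vertices $R\{z^{u_i}e_i\}$ and $R\{z^{u'_i}e_i\}$ in $\BT_d$ translates to $\delta(u,u')=1$. Second, Theorem~\ref{thm:triang} already asserts that the standard triangulation of $\TP^{d-1}$ restricts to a triangulation of every polytope of the form \eqref{cell}, $Q$ in particular, whose simplices are precisely those simplices of the ambient triangulation whose vertices all lie in $Q$. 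Since an apartment of $\BT_d$ is a full simplicial subcomplex (a simplex lies in $[M_k]$ whenever all of its vertices do), a simplex of $[I]$ lies in $[M_1]\cap\cdots\cap[M_s]$ if and only if all of its vertices do, i.e.\ if and only if all its vertices lie in $Q$. Combining the two facts gives the theorem.

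\textbf{Where the work sits.} The reduction and the final assembly are essentially bookkeeping; the substantive content was already packaged into Lemma~\ref{lem:liesIn} and Corollary~\ref{cor:Ales}, whose proofs encode the connection between the Smith normal form of a matrix over $R$ and the tropical product $\val(M)\odot\val(M^{-1})$. The only point where I would be careful is confirming the ``full subcomplex'' property of apartments used in the last step — this is a standard feature of Bruhat--Tits buildings (apartments are flag subcomplexes since adjacency depends only on pairs of vertices, and $[M]$ is defined by a vertex-wise membership condition), but it deserves an explicit line in the write-up.
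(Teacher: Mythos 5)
Your proof is correct and follows essentially the same route as the paper: normalize $M_1$ to the identity by left multiplication, apply Corollary~\ref{cor:Ales} to each remaining $M_k$ to identify the vertex set of the intersection with the lattice points of the inversion-domain polytope, and pass to the standard triangulation via Theorem~\ref{thm:triang}. The paper leaves the final bookkeeping (the full-subcomplex property of apartments and the translation of adjacency into $\delta$-distance one) implicit, so your extra care on those points is a welcome elaboration rather than a different approach.
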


\section*{Conclusion}

\noindent
We have demonstrated that tropical convexity is a useful tool for computations with affine buildings.  Given the
ubiquitous appearance of affine buildings in mathematics, we are optimistic that our approach can be of interest for a
wide range of applications.  Such applications may arise in fields as diverse as geometric topology \cite{Ale}, number
theory \cite{Fal,Set}, algebraic geometry \cite{Ka,KT}, representation theory \cite{Gor}, harmonic analysis \cite{Par},
and differential equations \cite{Cor}.  Experts in combinatorial representation theory may find it interesting to
generalize our constructions and algorithms to affine buildings of other types.  This will require to investigate, for
instance, the $B_n$-analogs of tropical polytopes.


\def\cprime{$'$}
\providecommand{\bysame}{\leavevmode\hbox to3em{\hrulefill}\thinspace}
\providecommand{\MR}{\relax\ifhmode\unskip\space\fi MR }
\providecommand{\MRhref}[2]{%
  \href{http://www.ams.org/mathscinet-getitem?mr=#1}{#2}
}
\providecommand{\href}[2]{#2}

\end{document}